\def\@settitle{%
  \vspace*{-20pt}
  \begin{flushleft}%
    \baselineskip14\p@\relax
    \normalfont\bfseries\LARGE
%    \uppercasenonmath\@title
    \@title
  \end{flushleft}%
}
\def\@setauthors{%
  \begingroup
  \def\thanks{\protect\thanks@warning}%
  \trivlist
  %\centering
  \large \@topsep30\p@\relax
  \advance\@topsep by -\baselineskip
  \item\relax
  \author@andify\authors
  \def\\{\protect\linebreak}%
%  \MakeUppercase{\authors}%
  \authors
  \ifx\@empty\contribs
  \else
    ,\penalty-3 \space \@setcontribs
    \@closetoccontribs
  \fi
  \normalfont
  \@setaddresses
  \endtrivlist
  \endgroup
}
\def\@setaddresses{\par
  \nobreak \begingroup
  \small
  \def\author##1{\nobreak\addvspace\smallskipamount}%
  \def\\{\unskip, \ignorespaces}%
  \interlinepenalty\@M
  \def\address##1##2{\begingroup
    \par\addvspace\bigskipamount\noindent
    \@ifnotempty{##1}{(\ignorespaces##1\unskip) }%
    {\ignorespaces##2}\par\endgroup}%
  \def\curraddr##1##2{\begingroup
    \@ifnotempty{##2}{\nobreak\noindent\curraddrname
      \@ifnotempty{##1}{, \ignorespaces##1\unskip}\/:\space
      ##2\par}\endgroup}%
  \def\email##1##2{\begingroup
    \@ifnotempty{##2}{\nobreak\noindent E-mail address%
      \@ifnotempty{##1}{, \ignorespaces##1\unskip}\/:\space
      \ttfamily##2\par}\endgroup}%
  \def\urladdr##1##2{\begingroup
    \def~{\char`\~}%
    \@ifnotempty{##2}{\nobreak\noindent\urladdrname
      \@ifnotempty{##1}{, \ignorespaces##1\unskip}\/:\space
      \ttfamily##2\par}\endgroup}%
  \addresses
  \endgroup
  \global\let\addresses=\@empty
}
\def\@setabstracta{%
    \ifvoid\abstractbox
  \else
    \skip@25\p@ \advance\skip@-\lastskip
    \advance\skip@-\baselineskip \vskip\skip@
%    \hrule\vskip2pt
    \box\abstractbox
    \prevdepth\z@ % because \abstractbox is a vtop
%    \vskip2pt\hrule
    \vskip-10pt
  \fi
}
\renewenvironment{abstract}{%
  \ifx\maketitle\relax
    \ClassWarning{\@classname}{Abstract should precede
      \protect\maketitle\space in AMS document classes; reported}%
  \fi
  \global\setbox\abstractbox=\vtop \bgroup
    \normalfont\small
    \list{}{\labelwidth\z@
      \leftmargin0pc \rightmargin\leftmargin
      \listparindent\normalparindent \itemindent\z@
      \parsep\z@ \@plus\p@
      
    }%
    \item[\hskip\labelsep\bfseries\abstractname.]%
}{%
  \endlist\egroup
  \ifx\@setabstract\relax \@setabstracta \fi
}
\def\section{\@startsection{section}{1}%
  \z@{-1.2\linespacing\@plus-.5\linespacing}{.8\linespacing}%
  {\normalfont\bfseries\Large}}
\def\subsection{\@startsection{subsection}{2}%
  \z@{-.8\linespacing\@plus-.3\linespacing}{.3\linespacing\@plus.2\linespacing}%
  {\normalfont\bfseries}}
\def\subsubsection{\@startsection{subsection}{3}%
  \z@{.7\linespacing\@plus.2\linespacing}{-1.5ex}%
  {\normalfont\itshape}}
\def\@secnumfont{\bfseries}
\def\to{\mathchoice{\longrightarrow}{\rightarrow}{\rightarrow}{\rightarrow}}
\newcommand{\shortxra}[2][]{\ext@arrow 0359\rightarrowfill@{#1}{#2}}
\def\longrightarrowfill@{\arrowfill@\relbar\relbar\longrightarrow}
\newcommand{\longxra}[2][]{\ext@arrow 0359\longrightarrowfill@{#1}{#2}}
\renewcommand{\xrightarrow}[2][]{\mathchoice{\longxra[#1]{#2}}%
  {\shortxra[#1]{#2}}{\shortxra[#1]{#2}}{\shortxra[#1]{#2}}}
\def\Nopagebreak{\@nobreaktrue\nopagebreak}
\theoremstyle{plain}
\newtheorem{theorem}{Theorem}[section]
\newtheorem{proposition}[theorem]{Proposition}
\theoremstyle{definition}
\newtheorem{remark}[theorem]{Remark}
\def\Z{\mathbb{Z}}
\def\Q{\mathbb{Q}}
\def\R{\mathbb{R}}
\def\NN{\mathbb{N}}
\def\cR{\mathcal{R}}
\def\cP{\mathcal{P}}
\def\sign{\operatorname{sign}}
\def\Spinc{\mathop{\mathrm{Spin}^c}}
\def\emptystr{}
\newcommand{\mkc}[2][]{\begin{color}{red}#2%
  \def\tempstr{#1}%
  \ifx\tempstr\emptystr \else\textsf{\SMALL\ \raise.7ex\hbox{[\tempstr]}}\fi
\end{color}}
\begin{document}

\title%
{Concordance to links with unknotted components}

\author{Jae Choon Cha}
\address{Department of Mathematics and PMI\\
 POSTECH \\
 Pohang 790--784\\
 Republic of Korea}
\email{jccha@postech.ac.kr}

\author{Daniel Ruberman}
\address{Department of Mathematics\\
  Brandeis University\\
  Waltham, MA 02454--9110\\
  USA}
\email{ruberman@brandeis.edu}

\def\subjclassname{\textup{2010} Mathematics Subject Classification}
\expandafter\let\csname subjclassname@1991\endcsname=\subjclassname
\expandafter\let\csname subjclassname@2000\endcsname=\subjclassname
\subjclass{%
  57M25, % Knots and links in $S^3$
  57M27, % Invariants of knots and 3-manifolds
  57N70. % Cobordism and concordance (in low dimension)
%  57Q60; % Cobordism and concordance (in high dimension)
%  57M07, % Topological methods in group theory
}

\keywords{Link concordance, covering link, rational concordance,
  complexity, Heegaard Floer homology}

\begin{abstract}
  We show that there are topologically slice links whose individual components are smoothly concordant to the
  unknot, but which are not smoothly concordant to any link with unknotted components.
   We also give generalizations in the topological category regarding components of prescribed Alexander
  polynomials.  The main tools are covering link calculus, algebraic
  invariants of rational knot concordance theory, and the correction
  term of Heegaard Floer homology.
\end{abstract}

\maketitle

\section{Introduction}

This paper addresses the following question which arises naturally in
the study of link concordance: to what extent can a component of a given link
vary under link concordance?  We have an obvious necessary
condition---for a given link $L$, if a knot $K$ appears as a component
of a link which is concordant to $L$, then $K$ is concordant to the
corresponding component of~$L$.  Therefore a natural question is
whether the converse holds: if $K$ is concordant to, say the first
component of $L$, is there a link concordant to $L$ which has $K$
as the first component?

In general, there is no such link. In the case when $K$ is unknotted, Cochran \cite{Cochran:1991-1} (using techniques introduced in~\cite{Cochran:1985-2}) and Cochran-Orr \cite{Cochran-Orr:1990-1,Cochran-Orr:1993-1} showed:
\begin{theorem}
  \label{theorem:main-top}
  There are links that have components smoothly concordant to the
  unknot but are not topologically concordant to any link with
  unknotted components.
\end{theorem}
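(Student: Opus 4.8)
The plan is to use \emph{covering link calculus}, the principle that the concordance class of a link controls the concordance classes of the links obtained as preimages of components in cyclic covers, together with a signature-type concordance invariant that, unlike Milnor's $\bar\mu$-invariants, is sensitive to the knotting of individual components. The first thing I would establish is a lifting lemma: if $L$ and $L'$ are topologically concordant and the first components $L_1,L'_1$ both bound disks (in particular if they are unknotted), then after passing to the $p$-fold cyclic cover of $S^3$ branched over the first component one again obtains links in homology spheres, and the ambient concordance lifts to a topological concordance of these \emph{covering links}. When the branching component is unknotted the cover is $S^3$, so its covering link lives in $S^3$; when it is merely slice, the cover is a homology sphere bounding a topological homology ball and the lifted concordance lives in the resulting homology cobordism. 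Iterating over several components produces a tower of covering links, each step constrained by the unknottedness (or sliceness) of the component branched over.

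Next I would construct an explicit link $L=L_1\cup\cdots\cup L_n$ whose components are smoothly slice, hence smoothly and topologically concordant to the unknot, but which carries a nontrivial covering obstruction. A natural construction is by \emph{infection}: start from a model link with unknotted components and a prescribed linking pattern, and tie the components into slice knots by a satellite operation that fixes each component's individual concordance class. The components of $L$ then remain concordant to the unknot, while the interaction between the component knotting and the linking is recorded by an iterated covering knot $\widetilde K$ extracted from the tower above.

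The obstruction step is to show that $\widetilde K$ carries a homology-concordance invariant that is nonzero---for instance a Casson--Gordon invariant or a Cheeger--Gromov $\rho$-invariant (equivalently an $L^2$-signature)---whereas for \emph{any} link $L'$ with unknotted components that is concordant to $L$, the corresponding covering knot $\widetilde K'$ has this invariant equal to zero. The vanishing for $L'$ is forced by unknottedness: at each stage of the tower the branching is over an unknot, the covers stay equal to $S^3$, and the covering knots $\widetilde K'$ range over a restricted family whose signature invariants can be computed and shown to vanish. Since the lifting lemma makes $\widetilde K$ and $\widetilde K'$ concordant in a homology cobordism, and the chosen invariant is an invariant of homology concordance, the two values cannot agree, ruling out every such $L'$.

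The main obstacle is twofold. First, the lifting lemma itself: one must verify that a topological concordance genuinely lifts to the branched covers, which is exactly where unknottedness (or sliceness) of the branching component and control of the linking numbers enter, and it requires careful bookkeeping of how the concordance annuli and the bounding disks behave under passage to branched covers. Second, and harder, is proving that the whole target class---covering knots arising from links with \emph{unknotted} components---has vanishing signature invariant while the covering knot of the explicitly infected $L$ does not; this is the computation that must be tuned in the construction, and it is the point at which the difference between a component being unknotted and merely concordant to the unknot is finally detected.
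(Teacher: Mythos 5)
Your overall architecture---lift a concordance to branched covers over a component, extract a covering knot, and hit it with a signature-type invariant---is the same as the paper's (which realizes Theorem~\ref{theorem:main-top} as the case $D=\{1\}$, $J_0=$ unknot of Theorem~\ref{theorem:topological-examples}, using the Cochran--Orr links $L(m,J)$ and a single $2$-fold covering step rather than an iterated tower). But the obstruction step, which you yourself identify as the hard part, is set up in a way that cannot work. You propose to show that for \emph{every} link $L'$ with unknotted components the covering knot $\widetilde K'$ has vanishing Casson--Gordon or $\rho$-invariant, ``forced by unknottedness'' because the covers stay equal to $S^3$. That vanishing claim is false: a link with unknotted components and linking number zero can have covering knots that are essentially arbitrary knots in $S^3$. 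The standard example is the Bing double of a knot $K$, whose components are unknotted yet whose covering knots recover (connected sums built from) $K$, with all of its nontrivial signatures; this is precisely the mechanism exploited in \cite{Cha-Kim:2008-1}, \cite{Cha-Livingston-Ruberman:2006-1}. So no invariant of the covering knot that merely ``vanishes on the target class'' can exist, and your argument has no way to close.

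What actually distinguishes the two situations is not the \emph{value} of a signature but the \emph{home} of the covering knot. If $K_1'$ is unknotted (or has $\Delta=1$), the covering knot $\widetilde K'$ lives in $S^3$ (or an integral homology sphere) and therefore has \emph{complexity one}: its meridian generates $H_1$ of the complement modulo torsion. The covering knot $K_L$ of the example $L(m,J)$ lives in a rational homology sphere where its meridian is divisible by $q=2m+1$. The rational-concordance invariant that sees this is the \emph{period} of the normalized signature jump function $\delta_{K}$: by Theorem~\ref{theorem:complexity-and-signature-period}, a knot rationally concordant to a complexity-one knot has $\delta$ of period one, whereas $\delta_{K_L}(\theta)=\delta_{J\#J^r}(\theta/q)$ has minimal period $q$ when $\delta_J$ has minimal period one (e.g.\ $J$ the trefoil). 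Your proposal is missing this complexity/period mechanism entirely, and without it (or the Casson--Gordon analogue in \cite{Livingston:1990-1}, which likewise does not reduce to a vanishing statement about covering knots) the proof does not go through.
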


This shows that the answer is negative in both topological and smooth
category.  In this paper, we investigate the difference between the smooth and
topological cases, and generalize Theorem~\ref{theorem:main-top} to obstruct concordances involving different knot types.  In the smooth case, we refine the
result by giving examples which are topologically trivial:

\begin{theorem}
  \label{theorem:main-smooth}
  There are topologically slice links that have smoothly slice
  components but are not smoothly concordant to any link with
  unknotted components.
\end{theorem}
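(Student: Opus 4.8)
The plan is to exhibit the links explicitly and to detect the failure of concordance to a link with unknotted components by a \emph{smooth} invariant, since by construction the links will be topologically slice and hence all of the classical and metabelian obstructions underlying Theorem~\ref{theorem:main-top} vanish. The only invariant that can survive is the Heegaard Floer correction term, so the entire argument is organized to convert a hypothetical smooth concordance into a constraint on correction terms that cannot hold. Covering link calculus is the device that performs this conversion: it turns a concordance of links into a (rational) concordance of knots lying in cyclic branched covers, where the $d$-invariant of a further branched cover can be computed and compared.

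For the construction I would start from a knot $K$ that is topologically slice but not smoothly slice, chosen so that the correction term $d$ of its double branched cover $\Sigma_2(K)$ is nonzero; such knots exist because topological sliceness makes $\Sigma_2(K)$ bound a topological rational homology ball while the smooth correction term is an independent obstruction. I would then build $L$ by infecting a fixed slice link $L_0$ along a curve $\eta$ using $K$, choosing $L_0$ and $\eta$ so that (i) each component of $L$ is a smoothly slice but nontrivial knot --- it is essential that the components be knotted, for otherwise $L$ would itself be a link with unknotted components --- and (ii) the topological sliceness of $K$ propagates to topological sliceness of $L$ as a whole, since infection by a topologically slice knot changes $L_0$ only up to topological concordance. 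Verifying (i) and (ii) is routine once $L_0$ and $\eta$ are fixed.

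Next, suppose toward a contradiction that $L$ is smoothly concordant to a link $J$ all of whose components are unknotted. Under the standard hypotheses of covering link calculus I would pass to the cyclic cover branched over a chosen component. On the $J$-side that component is unknotted, so its branched cover is $S^3$ and the resulting covering link lies in $S^3$; by the choice of pattern its relevant covering knot is smoothly slice. On the $L$-side the branched component is a slice knot, so its branched cover is a rational homology sphere $Y$ bounding a rational homology ball, and lifting the concordance annuli through the branched cover of $S^3\times[0,1]$ produces a \emph{rational} concordance, in a rational homology cobordism from $Y$ to $S^3$, carrying the covering knot $\kappa$ of $L$ to the slice covering knot of $J$. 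Thus the whole problem reduces to showing that $\kappa$ is not smoothly rationally slice, and the infection is arranged so that $\kappa$ records a copy of $K$.

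Finally I would obstruct rational sliceness of $\kappa$ by the correction term: a rational homology cobordism constrains the $d$-invariants of $\Sigma_2(\kappa)$ across its $\Spinc$ structures, and the construction is set up so that the relevant value $d(\Sigma_2(\kappa),\mathfrak{s})$ is computed from $d(\Sigma_2(K))$ and is therefore nonzero, contradicting rational sliceness. The main obstacle, and the technical heart of the proof, is exactly this control of correction terms through two successive covering constructions: one must check that covering link calculus applies verbatim in the rational homology sphere setting, track how the $\Spinc$ structures and their $d$-invariants transform under the rational homology cobordism, and prove the additivity and monotonicity estimates for $d$ that guarantee the nonvanishing of $d(\Sigma_2(K))$ is not destroyed in passing to $\kappa$. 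Establishing this correction-term computation for the explicit $K$, together with the rational-cobordism inequalities that make the contradiction rigorous, is where essentially all of the work lies.
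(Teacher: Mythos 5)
Your overall architecture --- an infection-type construction with a topologically slice companion knot, covering link calculus to convert a hypothetical smooth concordance into a rational concordance of covering knots, and Heegaard Floer correction terms as the final smooth obstruction --- is exactly the paper's strategy. But there is a genuine gap at your reduction step. If $L$ is smoothly concordant to a link $L'$ with unknotted components, then the double branched cover of $S^3$ over the unknotted first component of $L'$ is indeed $S^3$, but the lift of the second component is an \emph{arbitrary, unknown} knot $\kappa'$ in $S^3$: nothing about $L'$ forces it to be slice, so your claim that ``by the choice of pattern its relevant covering knot is smoothly slice'' is unjustified. Consequently the problem does not reduce to showing that the covering knot $\kappa$ of $L$ is not rationally slice; one must rule out a rational concordance from $\kappa$ to \emph{every} knot in an integral homology sphere. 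Your proposed detector, the nonvanishing of $d(\Sigma_2(\kappa),\mathfrak{s})$, cannot do this: the comparison manifold $\Sigma_2(\kappa')$ has uncontrolled first homology (of order $|\Delta_{\kappa'}(-1)|$) and uncontrolled correction terms, so its $q$-primary part may be nontrivial, the Casson--Gordon metabolizer condition then need not isolate a square-root-order subgroup of $H_1(\Sigma_2(\kappa))_q$, and the absolute value of $d$ is in any case shifted by the unknown $d(\Sigma_2(\kappa'),\cdot)$.

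The paper closes exactly this hole in two ways. First, instead of a second branched cover it forms the $1$-surgery manifold $Y$ of the covering knot (Proposition~\ref{proposition:1/n-surgery-along-knots-in-ZHS}): $1$-surgery on \emph{any} knot in an integral homology sphere is again an integral homology sphere, so the comparison manifold has trivial $q$-primary homology no matter what $\kappa'$ is, and the metabolizer produced by Proposition~\ref{proposition:obstruction-to-being-H-cob-to-(Z/q)HS} is forced to be the unique order-$q$ subgroup of $H_1(Y)_q\cong\Z_{q^2}$. Second, it uses the normalized invariant $\bar d(Y,s)=d(Y,s)-d(Y,0)$, which is insensitive to the unknown correction terms on the far side of the cobordism. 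With $J$ a connected sum of positive Whitehead doubles of the trefoil, $Y\cong M_0\# M$ where $M$ is the $(2m+1)^2$-surgery on $T(2m+1,2m)\# J\# J^r$, and the computation $\bar d(M,q)\ge 2$ of Hedden--Livingston--Ruberman yields the contradiction. To make your argument close, you would need to replace the $\Sigma_2(\kappa)$ step with some such device that controls the homology of the comparison manifold independently of the unknown knot $\kappa'$.
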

In fact, there is no concordance to a link having components with trivial Alexander polynomial.

We also address the general case about concordance to links with an
arbitrarily given component, in place of an unknotted
component.
In the topological setting, we considerably extend Theorem
\ref{theorem:main-top}, giving obstructions that detect links not
concordant to any link whose first component belongs to a given
collection of Alexander polynomials: \begin{theorem}
  \label{theorem:main-topological}
  For any finite collection $D$ of classical Alexander polynomials of
  knots and for any knot $J_0$ with $\Delta_{J_0}(t)\in D$, there are
  links $L=K_1\cup K_2$ satisfying the following:
  \begin{enumerate}
  \item The first component $K_1$ of $L$ is smoothly concordant to the
    given~$J_0$.
  \item $L$ is not topologically concordant to any link $L'=K'_1\cup
    K'_2$ with $\Delta_{K'_1}(t)\in D$.
  \end{enumerate}
\end{theorem}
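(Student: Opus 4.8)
The plan is to construct the links $L = K_1 \cup K_2$ explicitly and use \emph{covering link calculus} to reduce the link-concordance obstruction to a knot-concordance obstruction in the rational concordance group, where we can apply algebraic invariants. First I would build $L$ so that $K_1 = J_0$ (or a knot smoothly concordant to it, to guarantee condition (1)), while $K_2$ is an unknot chosen so that the linking structure encodes a piece of extra data. The key idea is that a concordance from $L$ to some $L' = K_1' \cup K_2'$ with $\Delta_{K_1'}(t) \in D$ would, by taking covers branched or unbranched over the second component $K_2$ (which is unknotted, hence has cyclic covers of its exterior that are again nice spaces), produce a concordance between certain \emph{derived knots} sitting inside the covers. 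Since $K_2$ is unknotted, the $p$-fold cyclic cover of $S^3$ branched over $K_2$ is again $S^3$, and the lift of $K_1$ (or a relevant satellite/pushoff built into the construction) becomes a knot $\widetilde{K}$ whose concordance type we can control.

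The second step is to arrange the construction so that the covering operation converts the finite-list constraint $\Delta_{K_1'}(t) \in D$ into a constraint that is detectable by a \emph{rational concordance invariant}. Concretely, I would choose $K_1$ to be (concordant to) an \emph{infection} or satellite of $J_0$ by a knot $C$ whose classical Alexander polynomial interacts with the finitely many polynomials in $D$ through a fixed prime $p$; covering link calculus then lifts the infecting curve so that in the cover the relevant knot invariant is governed by a Blanchfield pairing or Casson–Gordon-type signature computed over the $p$-torsion. The crucial point is that the Alexander polynomials in $D$ form a finite set, so only finitely many primes $p$ and only finitely many algebraic conditions can be imposed by them; I would choose the infecting data (e.g. the signature profile of $C$, or an associated metabelian $\rho$-invariant / rational Seifert form) to avoid \emph{all} of these finitely many algebraic possibilities simultaneously.

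The third step is the algebraic heart: I need an invariant of the rational knot concordance group that (a) vanishes for any knot whose Alexander polynomial lies in $D$ after the covering reduction, and (b) is nonzero for the derived knot coming from my chosen $L$. For this I would use the algebraic invariants of rational knot concordance --- a Blanchfield form over $\Q[t,t^{-1}]$ localized at the relevant primes, together with signature-type invariants (Casson–Gordon or $L^2$ $\rho$-invariants) that are insensitive to the ambiguity introduced by covering link calculus but sensitive to the infection. Because $D$ is finite, the set of ``forbidden'' values of these invariants is finite, and by iterating the infection (taking connected sums or repeated satellites of $C$) I can push the invariant of the derived knot outside this finite bad set while keeping condition (1) intact, since infection along a nullhomologous curve does not change the concordance class of $K_1$ relative to $J_0$.

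The main obstacle I expect is \textbf{controlling the covering link calculus precisely enough}: a topological concordance from $L$ to $L'$ does not directly give a concordance of the lifted knots unless one is careful about which covers survive under concordance and how the second components $K_2$ and $K_2'$ interact. In particular, $K_2'$ need \emph{not} be unknotted, so the cover of $S^3$ branched over $K_2'$ need not be $S^3$, and the ``derived knot'' on the $L'$ side lives in a homology sphere rather than $S^3$. The correct framework is therefore \emph{rational} (homology-cobordism-invariant) concordance, and the real work is showing that covering link calculus yields a \emph{rational concordance} between the derived knots in integral (or rational) homology spheres, to which the rational-concordance invariants and Casson–Gordon signatures still apply. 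Verifying that the chosen signature invariant is genuinely a rational-concordance invariant --- and computing it for the infected knot well enough to separate it from the finitely many values allowed by $D$ --- is where the essential difficulty lies.
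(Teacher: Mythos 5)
There is a genuine gap: your covering link reduction branches over the \emph{wrong} component, and as a result the hypothesis $\Delta_{K_1'}(t)\in D$ is never actually brought to bear. You propose to take cyclic covers branched over $K_2$ (unknotted, so the cover of the $L$ side is $S^3$) and study the lift of $K_1$. But a concordance from $L$ to $L'$ places \emph{no} constraint whatsoever on $K_2'$ --- the hypothesis constrains only $\Delta_{K_1'}$ --- so on the $L'$ side the cover branched over $K_2'$ is a completely uncontrolled $\Z_p$-homology sphere, and the lift of $K_1'$ into it is not governed by $\Delta_{K_1'}$ in any usable way (it depends on how $K_1'$ sits relative to the arbitrary $K_2'$). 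Your own closing paragraph flags that the derived knot on the $L'$ side lives in some homology sphere, but the point is worse than a technicality: with your choice of branching component there is nothing to pin that homology sphere down, so no invariant of the derived knot can see the finite set $D$.

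The paper runs the reduction the other way: take the $d$-fold cover ($d=p^a$) branched over the \emph{first} component and lift the \emph{second}. By Fox's formula, if $\Delta_{K_1'}\in D=\{f_1,\dots,f_r\}$ then the ambient cover $Y_{L'}$ has $|H_1(Y_{L'})|=\cR_d(f_i)$ for some $i$; choosing a prime $q$ dividing none of the finitely many numbers $\cR_d(f_1),\dots,\cR_d(f_r)$ makes $Y_{L'}$ a $\Z_q$-homology sphere, which forces the covering knot $K_{L'}$ to have complexity prime to $q$, which in turn forces the signature jump function $\delta$ of the covering knot $K_L$ (a rational concordance invariant) to have a period prime to $q$. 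The example $L(m,J)$ with $m=(q-1)/2$ and $J$ a trefoil is engineered so that $\delta_{K_L}(\theta)=\delta_{J\#J^r}(\theta/q)$ has minimal period exactly $q$, yielding the contradiction. This is the concrete mechanism your second and third steps gesture at (``finitely many algebraic conditions'') but do not supply: the finiteness of $D$ is used only to choose a single prime $q$ avoiding finitely many integers, not to avoid a finite set of values of a Blanchfield or Casson--Gordon invariant. Your construction of $K_1$ as an infection of $J_0$ is compatible with the paper's $L(m,J)$ (and condition (1) is handled the same way), but without the twisting parameter $m$ tied to $q$ there is no reason the derived knot's invariant escapes the allowed periods.
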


We give a smooth refinement of Theorem~\ref{theorem:main-topological}
by providing examples of links which are topologically concordant to a
link with first component~$J_0$. See
Theorem~\ref{theorem:topologically-slice-smooth-examples} for details.

To prove the above results, we give general obstructions by combining
several known techniques.  In particular the main ingredients are the
following: covering link calculus as used in \cite{Cochran-Orr:1993-1}
and formulated in \cite{Cha-Kim:2008-1} (see also
\cite{Cha-Livingston-Ruberman:2006-1, VanCott:2009-01,
  LevineA:2009-01}), invariants of rational knot concordance
(\cite{Cha:2003-1}; see also \cite{Cochran-Orr:1993-1,Cha-Ko:2006-1}),
and the $d$-invariant (or correction term) of Heegaard Floer homology
(\cite{Ozsvath-Szabo:2003-2}; see also \cite{Jabuka-Naik:2007-1,
  Grigsby-Ruberman-Saso:2008-1, Hedden-Livingston-Ruberman:2010-01}).

The obstructions used to prove Theorems~\ref{theorem:main-smooth} and~\ref{theorem:main-topological} are described in Section~\ref{section:obstructions}.  See (respectively)
Theorems~\ref{theorem:d-obstruction-to-given-Alexander-poly}
and~\ref{theorem:complexity-obstruction-to-given-Alexander-poly}.

In Section~\ref{section:examples}, we present explicit examples of
links.  The topological and smooth cases are dealt with in
Theorem~\ref{theorem:topological-examples} and
Theorem~\ref{theorem:topologically-slice-smooth-examples},
respectively.

In Section~\ref{section:rational-homology-cobordism}, we investigate the
nontriviality of rational homology cobordism groups modulo
the classes of $\Z_q$-homology spheres using the 3-manifolds
associated to our link examples.

\section{Links with components having given Alexander polynomials}
\label{section:obstructions}

We consider ordered links, namely the components of a link are given a
preferred labeling by integers $1,\ldots, m$.  We also always assume
that links are oriented.  We say two $m$-component links
$L=K_1\cup\cdots\cup K_m$ and $L'=K'_1\cup\cdots\cup K'_m$ in $S^3$
are \emph{topologically (resp.\ smoothly) concordant} if there are
disjoint locally flat (resp.\ smooth) cylinders $C_i$ ($i=1,\ldots,m$)
embedded in $S^3\times[0,1]$ with $\partial C_i = K_i\times 0 \cup
-K'_i \times 1$.

To simplify the notation we consider only the case of two-component
links.  We remark that all the arguments and results apply to the case
of any number of components ($\ge 2$) as well as the two-component
case.

For a link $L=K_1\cup K_2$, we consider covering links as formulated
and used in \cite{Cha-Kim:2008-1} (see also \cite{Cochran-Orr:1993-1},
\cite{Cha-Livingston-Ruberman:2006-1}, \cite{VanCott:2009-01},
\cite{LevineA:2009-01}).  In particular, we make use of the following
special case.  Suppose $d$ is a power of a prime $p$ and let
$Y_L$ (or just $Y$ if the link $L$ is understood) be
the $d$-fold cyclic branched cover of $S^3$ along $K_1$, the first
component of the given link~$L$.  It is known that $Y_L$ is a
$\Z_p$-homology sphere.  (In this paper $\Z_p$ denotes $\Z/p\Z$.)  If
$L$ has linking number zero, the pre-image of $K_2$ in $Y$ consists of
$p$ components.  Choose a component $K_L$, which we call a
\emph{covering knot} of~$L$.  (For our purpose the choice of a
component gives no ambiguity since the $\Z_d$-action permutes these
component.)

\begin{proposition}
  \label{proposition:ZHS-obstruction-to-Alexander-one-component}
  Suppose $L$ is topologically (resp.\ smoothly) concordant to a link
  $L'=K'_1\cup K'_2$ with $\Delta_{K'_1}(t)=1$.  Then for any power
  $d$ of an arbitrary prime $p$, the $d$-fold covering knot
  $(Y_L,K_L)$ of $L$ described above is topologically (resp.\
  smoothly) $\Z_p$-concordant to a knot in an integral homology
  sphere.
\end{proposition}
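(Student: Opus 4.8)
The plan is to lift the hypothesized concordance to the $d$-fold branched covers and then read off the two required conclusions from the Alexander polynomial hypothesis. First I would record that, since the linking number is a concordance invariant and $\lk(K_1,K_2)=0$, the concordant link $L'$ also satisfies $\lk(K_1',K_2')=0$; hence the covering knot $(Y_{L'},K_{L'})$ is defined by exactly the recipe described before the statement. It therefore suffices to prove two things: that $(Y_L,K_L)$ is $\Z_p$-concordant to $(Y_{L'},K_{L'})$, and that $Y_{L'}$ is an integral homology sphere.

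For the first claim I would run the covering link calculus of \cite{Cha-Kim:2008-1}. Let $C_1,C_2\subset S^3\times[0,1]$ be disjoint locally flat (resp.\ smooth) concordance cylinders realizing the given concordance from $L$ to $L'$, and let $W$ be the $d$-fold cyclic branched cover of $S^3\times[0,1]$ branched along $C_1$. Restricting to the two ends shows that $W$ is a cobordism from $Y_L$ to $Y_{L'}$, and because $d$ is a prime power and $C_1$ is an annulus carrying the homology of the product, $W$ is a $\Z_p$-homology cobordism. Since $\lk(K_1,K_2)=0$, the image of $\pi_1(C_2)$ in the deck group $\Z_d$ is trivial, so $C_2$ lifts to $W$; choosing the annulus component $A$ of its preimage with $\partial A=K_L\times 0\cup -K_{L'}\times 1$ (after composing with a deck transformation if necessary, which is harmless as the $\Z_d$-action permutes the lifts) exhibits $(Y_L,K_L)$ as $\Z_p$-concordant to $(Y_{L'},K_{L'})$.

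For the second claim I would invoke Fox's formula for the order of the first homology of a cyclic branched cover: $|H_1(Y_{L'})|=\prod_{j=1}^{d-1}|\Delta_{K_1'}(\zeta^j)|$ with $\zeta=e^{2\pi i/d}$. As $\Delta_{K_1'}(t)=1$ by hypothesis, every factor is $1$, so $H_1(Y_{L'})=0$ and $Y_{L'}$ is an integral homology sphere. Combining the two claims, $(Y_L,K_L)$ is $\Z_p$-concordant to the knot $(Y_{L'},K_{L'})$ lying in an integral homology sphere, as claimed; the topological and smooth cases run in parallel, depending only on the category of $C_1,C_2$.

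The main obstacle is the homological bookkeeping in the second paragraph: one must check that the prime-power branched cover $W$ of the product concordance genuinely is a $\Z_p$-homology cobordism, and that the lifted annulus $A$ meets the precise definition of $\Z_p$-concordance used here (properly embedded, with the correct boundary and the right homological behavior in $W$). This is exactly where the hypothesis $d=p^r$ is used, through the standard fact that cyclic branched covers of prime-power degree preserve $\Z_p$-homology; once that is in place the remaining steps are formal.
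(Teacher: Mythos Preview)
Your proposal is correct and follows essentially the same route as the paper: you spell out in detail the covering link calculus argument that the paper merely cites from \cite{Cha-Kim:2008-1}, and you invoke Fox's formula where the paper simply asserts that Alexander polynomial one forces the branched cover to be an integral homology sphere. The only differences are expository (you explicitly verify the linking-number compatibility and the $\Z_p$-homology cobordism property of~$W$), not mathematical.
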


Here, as in \cite{Cha:2003-1}, two knots $(Y,J)$ and $(Y',J')$ in
$R$-homology 3-spheres $Y$ and $Y'$ are called topologically
(resp.~smoothly) $R$-concordant if there is a pair $(W,C)$ of
a 4-manifold $W$ and an embedded cylinder $C$ satisfying
$H_*(W,Y;R)=0=H_*(W,Y';R)$ and $\partial (W,C)=(Y,J)\cup -(Y',J')$.

\begin{proof}
  It is known that if $L$ and $L'$ are concordant, then the covering
  knot $K_L$ of $L$ is concordant to the corresponding covering knot
  $K_{L'}$ of $L'$ (e.g., see the argument of
  \cite[Theorem~2.2]{Cha-Kim:2008-1}).  The covering knot $K_{L'}$
  lies in the $d$-fold cyclic branched cover of $S^3$ along~$K_1'$,
  say $Y_{L'}$, which is an integral homology sphere, since $K'_1$ has
  Alexander polynomial one.  This argument works in both smooth and
  topological cases.
\end{proof}

A key fact used in the above proof of
Proposition~\ref{proposition:ZHS-obstruction-to-Alexander-one-component}
is that the order of the homology of $Y_{L'}$ is determined by the Alexander
polynomial of the component~$K'_1$.  Using this fact more extensively,
Proposition~\ref{proposition:ZHS-obstruction-to-Alexander-one-component}
generalizes as follows.

For convenience of notation, for a polynomial $f(t)$ we denote
\[
\cR_d(f) = \Big|\prod_{k=0}^{d-1} f(e^{2\pi k\sqrt{-1}})\Big|.
\]
It is well known that for a knot $K$ with Alexander polynomial
$\Delta_K(t)$, the first homology of the $d$-fold cyclic branched
cover of $(S^3,K)$ has order~$\cR_d(\Delta_K)$~\cite{Fox:1956-1}.  We
note that $\cR_d(\Delta_K)$ may be zero in general (in this case the
first homology has nontrivial free part), but if $d$ is a prime power,
$\cR_d(\Delta_K)$ is always a positive integer.

Throughout this paper, we will write
\[
D = \{f_1(t),\ldots,f_r(t)\}
\]
for a finite collection of classical Alexander polynomials.  By definition, these are integer coefficient polynomials satisfying $f_i(1)=\pm 1$ and $f(t^{-1}) = t^{2g}f(t)$ for some $g \in \NN$; we will make use of just the first condition.  For a
prime power $d=p^a$, define $\cP_d(D)$ to be the set of primes that
do not divide $\cR_d(f_i)$ for all~$i=1,\ldots,r$.  One easily sees
the following two properties: 
\begin{enumerate}
\item All but finitely many primes are
  in~$\cP_d(D)$.
\item For $D=\{1\}$, $\cP_d(D)$ is the set of all primes.
\end{enumerate}

\begin{theorem}
  \label{theorem:Z/q-HS-obstruction-to-given-Alexander-poly}
  Suppose a link $L$ is topologically (resp.\ smoothly) concordant to
  a link $L'=K'_1\cup K'_2$ for which $\Delta_{K'_1}(t)$ lies in~$D$.
  Then for any prime power $d=p^a$, the $d$-fold covering knot
  $(Y_L,K_L)$ of $L$ described above is topologically (resp.\
  smoothly) $\Z_p$-concordant to a knot in a 3-manifold which is a
  $\Z_q$-homology sphere for any $q \in \cP_d(D)$.
\end{theorem}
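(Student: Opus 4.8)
The plan is to run the argument of Proposition~\ref{proposition:ZHS-obstruction-to-Alexander-one-component} essentially verbatim, replacing the statement ``$Y_{L'}$ is an integral homology sphere'' by the sharper bookkeeping that its first homology has order exactly $\cR_d(\Delta_{K'_1})$, and then reading off the set of primes $q$ for which this order is invertible. Thus the two ingredients I would assemble are (i) the covering-knot concordance supplied by covering link calculus, and (ii) Fox's computation of $|H_1(Y_{L'})|$ together with the defining property of $\cP_d(D)$.

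First I would fix a prime power $d=p^a$ and a topological (resp.\ smooth) concordance from $L=K_1\cup K_2$ to $L'=K'_1\cup K'_2$. Exactly as in the proof of Proposition~\ref{proposition:ZHS-obstruction-to-Alexander-one-component}, the argument of \cite[Theorem~2.2]{Cha-Kim:2008-1} produces a topological (resp.\ smooth) $\Z_p$-concordance between the covering knot $(Y_L,K_L)$ and the corresponding covering knot $(Y_{L'},K_{L'})$ of~$L'$; here both $Y_L$ and $Y_{L'}$ are genuinely $\Z_p$-homology spheres precisely because $d$ is a prime power, which is what allows the covering link calculus to be carried out with $\Z_p$ coefficients. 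The target knot-in-$3$-manifold is therefore $(Y_{L'},K_{L'})$, where $Y_{L'}$ is the $d$-fold cyclic branched cover of $S^3$ along~$K'_1$.

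It then remains to identify which $q$ make $Y_{L'}$ a $\Z_q$-homology sphere. By Fox's formula \cite{Fox:1956-1}, and because $d$ is a prime power so that $\cR_d(\Delta_{K'_1})$ is a positive integer, the order of $H_1(Y_{L'};\Z)$ equals $\cR_d(\Delta_{K'_1})$. Since $\Delta_{K'_1}\in D=\{f_1,\ldots,f_r\}$, we have $\Delta_{K'_1}=f_i$ for some~$i$, so $|H_1(Y_{L'};\Z)|=\cR_d(f_i)$. For any $q\in\cP_d(D)$ the defining property of $\cP_d(D)$ gives $q\nmid\cR_d(f_j)$ for every~$j$, in particular $q\nmid|H_1(Y_{L'};\Z)|$; hence the single manifold $Y_{L'}$ is a $\Z_q$-homology sphere simultaneously for all such~$q$. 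Combining this with the $\Z_p$-concordance from the previous step completes the argument.

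Since each step reduces to a cited result, I do not expect a genuine obstacle; the only point demanding care is that the covering-knot concordance be an honest $\Z_p$-concordance rather than a weaker relation, which hinges on $d$ being a prime power so that all the branched covers in sight are $\Z_p$-homology spheres. The conceptual content is entirely in the homology count: Fox's formula turns $\Delta_{K'_1}$ into the order $\cR_d(\Delta_{K'_1})$, and the set $\cP_d(D)$ is engineered to be exactly the primes coprime to all the possible values of this order as $\Delta_{K'_1}$ ranges over~$D$.
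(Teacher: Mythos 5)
Your proposal is correct and matches the paper's own proof: both invoke the covering-link concordance argument of Proposition~\ref{proposition:ZHS-obstruction-to-Alexander-one-component} (via \cite[Theorem~2.2]{Cha-Kim:2008-1}) to get the $\Z_p$-concordance to $(Y_{L'},K_{L'})$, and both use Fox's formula to see that $|H_1(Y_{L'})|=\cR_d(f_i)$ is prime to every $q\in\cP_d(D)$. No further comment is needed.
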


\begin{proof}
  Observe that if $\Delta_{K'_1}(t)=f_i(t)\in D$, then the $d$-fold
  covering link $K_{L'}$ of $L'$ lies in a 3-manifold $Y_{L'}$
  satisfying $|H_1(Y_{L'})|=\cR_d(f_i)$, by the above discussion.  By
  the hypothesis that $q$ does not divide $\cR_d(f_i)$, $Y_{L'}$ is a
  $\Z_q$-homology sphere. The same argument as the proof of
  Proposition~\ref{proposition:ZHS-obstruction-to-Alexander-one-component}
  concludes that $(Y_L,K_L)$ is $\Z_p$-concordant to
  $(Y_{L'},K_{L'})$.
\end{proof}

\subsection*{Obstructions from rational concordance theory: complexity
  of knots}

In \cite{Cochran-Orr:1993-1}, the notion of the complexity of a
(codimension two) knot in a rational homology sphere was first
introduced, and subsequently studied in
\cite{Cha:2003-1,Cha-Ko:2000-1} extensively. For the convenience of
the reader, we recall the definition from \cite[Definition
2.8]{Cha:2003-1}. For a knot $K$ in a rational homology 3-sphere
$\Sigma$, an Alexander duality argument shows that
$H_1(\Sigma-K;\Z)/\text{torsion}\cong \Z$. An essential difference
from the integral homology sphere case is that the class of a meridian
of $K$ does not necessarily generates
$H_1(\Sigma-K;\Z)/\text{torsion}$ though it is nonzero. The
\emph{complexity of $K$} is defined to be the absolute value of the
element in $\Z\cong H_1(\Sigma-K;\Z)/\text{torsion}$ represented by
the meridian.

Obviously, a knot in an integral homology sphere has complexity one,
by Alexander duality with integral coefficients.  Therefore, from
Proposition~\ref{proposition:ZHS-obstruction-to-Alexander-one-component},
we obtain the following:

\begin{proposition}
  [Special case of
  Theorem~\ref{theorem:complexity-obstruction-to-given-Alexander-poly}]
  \label{proposition:complexity-obstruction-to-Alexander-one-component}
  Suppose $L$ is topologically (resp.\ smoothly) concordant to a link
  $L'=K'_1\cup K'_2$ with $\Delta_{K'_1}(t)=1$.  Then for any power
  $d$ of $p$, the $d$-fold covering knot $(Y_L, K_L)$ of $L$
  described above is topologically (resp.\ smoothly)
  $\Z_p$-concordant to a knot with complexity one.
\end{proposition}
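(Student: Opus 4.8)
The plan is to obtain this as an immediate consequence of Proposition~\ref{proposition:ZHS-obstruction-to-Alexander-one-component} combined with the elementary observation recorded just before the statement. First I would invoke Proposition~\ref{proposition:ZHS-obstruction-to-Alexander-one-component}: under the hypothesis that $L$ is topologically (resp.\ smoothly) concordant to $L'=K'_1\cup K'_2$ with $\Delta_{K'_1}(t)=1$, that proposition tells us that for any power $d$ of $p$ the covering knot $(Y_L,K_L)$ is $\Z_p$-concordant to the corresponding covering knot $(Y_{L'},K_{L'})$, where $Y_{L'}$ is the $d$-fold cyclic branched cover of $S^3$ along $K'_1$. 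Since $\Delta_{K'_1}(t)=1$, the manifold $Y_{L'}$ is an integral homology sphere, so $K_{L'}$ is a knot in an integral homology sphere.

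Next I would apply the complexity computation. In an integral homology sphere $\Sigma$, Alexander duality with $\Z$ coefficients yields $H_1(\Sigma-K;\Z)/\text{torsion}\cong\Z$ with the meridian of $K$ representing a generator, so every knot in an integral homology sphere has complexity one. In particular $(Y_{L'},K_{L'})$ has complexity one. Combining the two steps, $(Y_L,K_L)$ is topologically (resp.\ smoothly) $\Z_p$-concordant to $(Y_{L'},K_{L'})$, a knot of complexity one, which is exactly the assertion.

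I do not expect a genuine obstacle here, as the substantive content has already been packaged into Proposition~\ref{proposition:ZHS-obstruction-to-Alexander-one-component} and the duality observation. The only point to keep straight is that ``complexity one'' is a strictly weaker conclusion than ``lies in an integral homology sphere'': one simply discards the finer information about $Y_{L'}$ and retains only the invariant needed to feed into the complexity obstruction of Theorem~\ref{theorem:complexity-obstruction-to-given-Alexander-poly}. This is also why the statement is naturally phrased as a special case of that theorem, the general version replacing ``complexity one'' by a bound governed by $\cR_d$ of the admissible Alexander polynomials.
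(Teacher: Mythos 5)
Your proposal is correct and is exactly the paper's argument: the authors derive this proposition by combining Proposition~\ref{proposition:ZHS-obstruction-to-Alexander-one-component} with the observation, stated immediately beforehand, that a knot in an integral homology sphere has complexity one by Alexander duality with integral coefficients. No further comment is needed.
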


Using Theorem~\ref{theorem:Z/q-HS-obstruction-to-given-Alexander-poly}
in place of
Proposition~\ref{proposition:ZHS-obstruction-to-Alexander-one-component},
we obtain the following generalization of
Proposition~\ref{proposition:complexity-obstruction-to-Alexander-one-component}:

\begin{theorem}
  \label{theorem:complexity-obstruction-to-given-Alexander-poly}
  Suppose $L$ is topologically (resp.\ smoothly) concordant to a link
  $L'=K_1'\cup K_2'$ satisfying $\Delta_{K_1'} \in D$.  Then for any
  prime power $d=p^r$, the $d$-fold covering knot $(Y_L,K_L)$ of $L$
  is topologically (resp.\ smoothly) $\Z_p$-concordant to a knot
  whose complexity is relatively prime to all $q\in \cP_d(D)$.
\end{theorem}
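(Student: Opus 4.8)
The plan is to deduce the statement from Theorem~\ref{theorem:Z/q-HS-obstruction-to-given-Alexander-poly} together with a purely homological bound on the complexity of a knot in terms of the order of the first homology of the ambient manifold. First I would invoke Theorem~\ref{theorem:Z/q-HS-obstruction-to-given-Alexander-poly}: under the concordance hypothesis, for the given prime power $d=p^r$ the covering knot $(Y_L,K_L)$ is $\Z_p$-concordant to the covering knot $(Y_{L'},K_{L'})$ of $L'$, whose ambient manifold $Y_{L'}$ satisfies $|H_1(Y_{L'};\Z)|=\cR_d(\Delta_{K'_1})$ (a positive integer, since $d$ is a prime power) and is a $\Z_q$-homology sphere for every $q\in\cP_d(D)$. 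Thus it suffices to show that the complexity of $K_{L'}$ is relatively prime to each such $q$, as then $(Y_L,K_L)$ is $\Z_p$-concordant to a knot with the asserted property.

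The key step is the following claim: if $J$ is a knot in a rational homology sphere $Y$, then the complexity $c$ of $J$ divides $|H_1(Y;\Z)|$. To prove it I would work in the exterior $X=Y-\inte N(J)$, which is homotopy equivalent to $Y-J$, so that $H_1(X;\Z)\cong \Z\oplus T$ with $T$ the torsion subgroup. By the definition recalled above, the meridian represents a class whose image in the free quotient $H_1(X;\Z)/\text{torsion}\cong\Z$ equals $\pm c$; write $[\mu]=(c,t_0)$ with $c>0$ (this is nonzero, as noted in the definition of complexity, so $\Z/c\Z$ is finite). Since $Y$ is recovered from $X$ by the trivial (meridional) Dehn filling along $\mu$, we have $H_1(Y;\Z)\cong H_1(X;\Z)/\langle[\mu]\rangle$. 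The projection to the free factor sends $[\mu]$ to $c$, and a direct inspection of the subgroup generated by $(c,t_0)$ produces a short exact sequence
\[
0\to T\to H_1(Y;\Z)\to \Z/c\Z\to 0,
\]
whence $|H_1(Y;\Z)|=c\cdot|T|$ and in particular $c$ divides $|H_1(Y;\Z)|$.

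Granting the claim, I would finish as follows. Applying it to $J=K_{L'}$ in $Y=Y_{L'}$, the complexity $c$ of $K_{L'}$ divides $|H_1(Y_{L'};\Z)|=\cR_d(\Delta_{K'_1})$. For any prime $q\in\cP_d(D)$, the defining property of $\cP_d(D)$ gives $q\nmid\cR_d(\Delta_{K'_1})$, hence $q\nmid c$, i.e.\ $\gcd(c,q)=1$. Therefore $(Y_L,K_L)$ is $\Z_p$-concordant to a knot whose complexity is relatively prime to all $q\in\cP_d(D)$, as required. I expect the only real obstacle to be the verification of the divisibility claim---namely the bookkeeping of the quotient $H_1(X;\Z)/\langle[\mu]\rangle$ in the presence of the torsion subgroup $T$; once this is in place, the result follows formally from Theorem~\ref{theorem:Z/q-HS-obstruction-to-given-Alexander-poly} and the definitions of complexity and of $\cP_d(D)$.
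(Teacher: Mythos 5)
Your proof is correct, and it follows the paper's overall route: reduce via Theorem~\ref{theorem:Z/q-HS-obstruction-to-given-Alexander-poly} to a statement about the covering knot $(Y_{L'},K_{L'})$, then apply an elementary homological constraint on complexity. The one genuine difference is the auxiliary lemma. The paper uses the fact that the complexity of a knot in a $\Z_q$-homology sphere is relatively prime to $q$, proved in one line by Alexander duality with $\Z_q$ coefficients: the meridian generates $H_1(\Sigma-K;\Z_q)\cong\Z_q$, so its image in $H_1(\Sigma-K)/\text{torsion}\cong\Z$ is nonzero mod~$q$. You instead prove the integral divisibility $c\mid|H_1(Y;\Z)|$ via the meridional Dehn filling and the exact sequence $0\to T\to H_1(Y)\to\Z/c\Z\to 0$; your identification $(c\Z\oplus T)/\langle(c,t_0)\rangle\cong T$ is correct, so the bookkeeping you flagged as the only potential obstacle does go through. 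Your lemma is marginally stronger (it gives $|H_1(Y)|=c\,|T|$ and immediately implies the paper's coprimality statement, since $q\nmid|H_1(Y)|$ for a $\Z_q$-homology sphere), at the cost of a slightly longer argument; note also that with your lemma the precise value $\cR_d(\Delta_{K_1'})$ is not actually needed---only that $q$ does not divide $|H_1(Y_{L'})|$, which is exactly the $\Z_q$-homology sphere conclusion already provided by Theorem~\ref{theorem:Z/q-HS-obstruction-to-given-Alexander-poly}.
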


\begin{proof}
  Suppose $q\in \cP_d(D)$.  By
  Theorem~\ref{theorem:Z/q-HS-obstruction-to-given-Alexander-poly},
  $K_L$ is $\Z_p$-concordant to a knot in a $\Z_q$-homology
  sphere.  The conclusion follows immediately from the following known
  fact, which is for example mentioned in \cite[p.\ 66]{Cha:2003-1}:
  the complexity of a knot $K$ in a $\Z_q$-homology sphere $\Sigma$
  is relatively prime to~$q$.  A standard argument for this is to use
  the duality with $\Z_q$ coefficients---by Alexander duality
  $H_1(\Sigma-K;\Z_q)\cong \Z_q$ is generated by a meridian of $K$,
  and thus the meridian represents an integer $\not\equiv 0 \text{
    (mod $q$)}$ in $H_1(\Sigma-K)/\text{torsion} \cong \Z$.
\end{proof}

There are known obstructions to a knot in a rational homology sphere
being topologically $\Q$-concordant to a knot with given complexity.
Cochran and Orr first discovered an obstruction from the period of a
signature function~\cite{Cochran-Orr:1993-1}, which was reformulated
in terms of Seifert matrices in~\cite{Cha-Ko:2000-1}.  Also, the first
author obtained further obstructions to being concordant to complexity
one knots from torsion invariants of rational knot concordance, in his
monograph~\cite[Theorem 4.17]{Cha:2003-1}.

In the next section we will give examples to which one can apply the
topological case of
Theorem~\ref{theorem:complexity-obstruction-to-given-Alexander-poly}
(and
Proposition~\ref{proposition:complexity-obstruction-to-Alexander-one-component})
combined with these known obstructions to being concordant to a knot
with given complexity.

\subsection*{Obstructions from Heegaard Floer $d$-invariants}

In case of smooth category, we obtain further obstructions from the
$d$-invariants which are ``correction terms'' of Heegaard Floer
homology of 3-manifolds~\cite{Ozsvath-Szabo:2003-2}.  For this purpose
first we make the following observation on the $(1/n)$-surgery along a
knot.  First note that for a knot $K$ in a rational homology sphere,
if $K$ has vanishing $(\Q/\Z)$-valued self-linking, then there is a
well-defined \emph{zero-framing} on $K$ which is uniquely determined
by the condition that $K$ and its parallel copy along the zero-framing
has vanishing $\Q$-valued linking number.  (The converse is also true.
For more about this, see \cite{Cha:2003-1}, \cite{Cha-Ko:2000-1}.)
This enables us to identify a slope of surgery along $K$ with an
element in $\Q\cup \{\infty\}$ when $K$ has vanishing $(\Q/\Z)$-valued
self-linking, exactly as in case of knots in~$S^3$.  In particular the
$(a/b)$-surgery along $K$ is well-defined for $a/b\in \Q$.

We remark that if a component, say $K$, of a link in a
$\Z_p$-homology sphere has vanishing $(\Q/\Z)$-valued self-linking,
then any component of a covering link of $L$ that projects to $K$ has
vanishing $(\Q/\Z)$-valued
self-linking~\cite[Section~2]{Cha-Kim:2008-1} (see also
\cite{Cha:2003-1}).  In particular, for a link $L$ in $S^3$, our covering
knot $(Y_L,K_L)$ has vanishing $(\Q/\Z)$-valued self-linking.  It
follows that the $(a/b)$-surgery along $K_L$ is well-defined.

\begin{proposition}
  \label{proposition:1/n-surgery-along-knots-in-ZHS}
  Let $R$ be a subring of $\Q$ or~$\Z_p$.  Suppose $K$ is a knot in an
  $R$-homology sphere with vanishing $(\Q/\Z)$-valued self-linking.
  If $K$ is $R$-concordant to a knot in a $\Z_q$-homology sphere,
  then the $(a/b)$-surgery manifold of $K$ is $R$-homology cobordant
  to a $\Z_q$-homology sphere for any $a$ relatively prime to~$q$.
\end{proposition}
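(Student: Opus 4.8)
The plan is to promote the given $R$-concordance between $(Y,K)$ and a knot $(Y',K')$ in a $\Z_q$-homology sphere into an $R$-homology cobordism between their $(a/b)$-surgery manifolds, and then to check by a direct mod-$q$ computation that the surgery on $K'$ is again a $\Z_q$-homology sphere. Write $(W,C)$ for the $R$-concordance, so that $W$ is an $R$-homology cobordism from $Y$ to $Y'$ and $C\subset W$ is a properly embedded annulus with $\partial C=K\sqcup -K'$. The target manifold will be $Y'_{a/b}(K')$; once I know that $Y_{a/b}(K)$ is $R$-homology cobordant to it and that it is a $\Z_q$-homology sphere, the proposition follows.

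To build the cobordism I would perform surgery on the concordance annulus. Since $C$ is an annulus its normal bundle in $W$ is trivial, and I first fix a normal framing of $C$ restricting on the two ends to the canonical $0$-framings of $K$ and $K'$; such a framing exists because $K$, and hence $K'$, has vanishing $(\Q/\Z)$-valued self-linking and $W$ is a $\Q$-homology cobordism, so the two $0$-framings are carried to one another by $C$. Identifying $\nu(C)\cong S^1\times[0,1]\times D^2$ via this framing, I remove $\inte \nu(C)$ from $W$ to obtain $W_0=W\setminus\inte \nu(C)$, and reglue a copy of $S^1\times[0,1]\times D^2$ along the tube $\Sigma=S^1\times[0,1]\times S^1$ by the diffeomorphism filling the slope $a\mu+b\lambda$, where $\mu$ and $\lambda$ denote the meridian and the $0$-framed longitude. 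The resulting $4$-manifold $V$ has $\partial V=Y_{a/b}(K)\sqcup -Y'_{a/b}(K')$, because on each end the regluing is exactly the $(a/b)$-Dehn filling.

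The cleanest computational point is that $Y'_{a/b}(K')$ is a $\Z_q$-homology sphere. Being a $\Z_q$-homology sphere, $Y'$ has $H_1(Y';\Z)$ finite of order prime to $q$, so it is a $\Q$-homology sphere and the $0$-framing $\lambda'$ of $K'$ is rationally nullhomologous in the exterior $E(K')$. A Mayer--Vietoris argument for $Y'_{a/b}(K')=E(K')\cup(\text{solid torus})$ gives $H_1(Y'_{a/b}(K');\Z)\cong H_1(E(K');\Z)/\langle a\mu'+b\lambda'\rangle$, and tensoring with $\Z_q$ yields $H_1(Y'_{a/b}(K');\Z_q)\cong H_1(E(K');\Z_q)/\langle a\mu'+b\lambda'\rangle$. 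By Alexander duality with $\Z_q$-coefficients $H_1(E(K');\Z_q)\cong\Z_q$ is generated by $\mu'$; moreover $\lambda'$ lies in the torsion subgroup of $H_1(E(K');\Z)$, whose order is prime to $q$, so $\lambda'$ maps to $0$ in $H_1(E(K');\Z_q)$. Hence $a\mu'+b\lambda'$ maps to $a\mu'$, a generator of $\Z_q$ since $a$ is prime to $q$; therefore $H_1(Y'_{a/b}(K');\Z_q)=0$ and, by $\Z_q$-Poincar\'e duality, $Y'_{a/b}(K')$ is a $\Z_q$-homology sphere.

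The main obstacle is to verify that $V$ is an $R$-homology cobordism. Here I would first show that $W_0$ is an $R$-homology cobordism between the exteriors $E(K)$ and $E(K')$: an excision/Mayer--Vietoris computation identifies $H_*(W,W_0;R)$ with $H_*(\nu(C),\Sigma;R)$, and using that $W$ is an $R$-homology cobordism and that $C$ is an annulus one deduces that the inclusions $E(K)\hookrightarrow W_0$ and $E(K')\hookrightarrow W_0$ are $R$-homology isomorphisms. Since the reglued piece is attached at both ends along the same slope $a\mu+b\lambda$, a gluing argument then shows that $Y_{a/b}(K)\hookrightarrow V$ and $Y'_{a/b}(K')\hookrightarrow V$ are $R$-homology isomorphisms as well, so $V$ is an $R$-homology cobordism. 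The two delicate points, which I expect to absorb most of the work, are the framing compatibility needed to define the surgery consistently along $C$, and the careful tracking of $R$-coefficient homology through the excision so that the meridian-generated $H_1$ of the exteriors is matched by that of $W_0$.
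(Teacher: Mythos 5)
Your proposal is correct and follows essentially the same route as the paper: remove a tubular neighborhood of the concordance annulus, reglue $S^1\times D^2\times[0,1]$ along the $(a/b)$-framing (using invariance of the rational-valued linking number to match the zero-framings), and observe that $(a/b)$-surgery on a knot in a $\Z_q$-homology sphere is again a $\Z_q$-homology sphere when $(a,q)=1$. The paper simply cites Gordon for the homology-cobordism verification and notes the reduction from $\Z_p$ to $\Z_{(p)}$ at the outset, both of which you fill in or implicitly handle.
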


\begin{proof}
  We may assume $R$ is a subring of $\Q$, since we can replace $\Z_p$
  with $\Z_{(p)}$, the ring of integers localized at~$p$.  The
  rational valued linking number is invariant under rational
  concordance (e.g., see \cite{Cha-Ko:2000-1,Cha:2003-1}).  Therefore,
  if $(W,C)$ is an $R$-concordance between two knots, then (compare
  \cite{Gordon:1975-1}) by removing a tubular neighborhood of $C$ from
  $W$ and filling in it with $S^1\times D^2\times[0,1]$ along the
  $(a/b)$-framing, we obtain an $R$-homology cobordism between their
  $(a/b)$-surgery manifolds.  The proof is completed by observing that
  if a knot is in a $\Z_q$-homology sphere and $(a,q)=1$, then the
  $(a/b)$-surgery along the knot is again a $\Z_q$-homology sphere.
\end{proof}

It was observed in~\cite{Hedden-Livingston-Ruberman:2010-01} that the
Ozsv\'ath-Szab\'o $d$-invariant~\cite{Ozsvath-Szabo:2003-2} gives an
obstruction to being smoothly $\Q$-homology cobordant to an integral
homology sphere.  To simplify the discussion of $\Spinc$ structures,
we restrict to the case of $\Z_2$-homology spheres and homology
cobordisms.  For a $\Z_2$-homology 3-sphere $Y$, the composition
\[
\Spinc(Y) \xrightarrow{c_1} H^2(Y) \xrightarrow{\mathrm{PD}} H_1(Y)
\]
of the first Chern class and Poincar\'e duality induces a bijection
that takes the unique Spin structure on $Y$ to $0 \in H_1(Y)$.  In the
remainder of the paper, we use this bijection to label $\Spinc$
structures by elements of $H_1(Y)$.

For $s\in H_1(Y)$, denote by $d(Y,s)$ the correction term invariant
defined from the $\Q$-valued grading of the Heegaard Floer homology of
$Y$.  Let
\[
\bar d(Y,s)=d(Y,s)-d(Y,0).
\]

Recall that a torsion abelian group $G$ decomposes into an inner direct
sum $G=\bigoplus_p G_p$ of $p$-primary summands
\[
G_p = \{x\in G \mid p^N x = 0 \text{ for some } N\ge 0\}
\]
where $p$ runs over all primes.  We remark that if $G$ is finitely
generated, that $|G_p|$ is equal to the maximal power of $p$ dividing
the order of~$G$.  Also, if $A$ is a subgroup of $G_p$, then obviously
$A_p = A\cap G_p$.

For our purpose the following generalization of
\cite[Theorem~3.2]{Hedden-Livingston-Ruberman:2010-01} is useful.  For
similar applications of $d$ and $\bar d$, see, e.g.,
\cite{Jabuka-Naik:2007-1, Grigsby-Ruberman-Saso:2008-1}.

\begin{proposition}
  \label{proposition:obstruction-to-being-H-cob-to-(Z/q)HS}
  Suppose $q$ is a prime and $Y$ is a $\Z_2$-homology 3-sphere which
  is $\Z_2$-homology cobordant to a $\Z_q$-homology sphere.  Then
  there is a subgroup $H$ of the $q$-primary part $H_1(Y)_q$ satisfying $|H|^2 =
  |H_1(Y)_q|$ and $\bar d(Y,s)=0$ for any $s\in H$.
\end{proposition}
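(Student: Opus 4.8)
The plan is to realize $Y$ and the $\Z_q$-homology sphere $Y'$ as the two ends of a cobordism $W$, to extract $H$ from the kernel of $H_1(\partial W)\to H_1(W)$, and to use a spin structure on $W$ to promote the resulting metabolizer into a statement about $\bar d$.

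First I would upgrade the hypothesis. Write $\partial W=-Y\sqcup Y'$ for the given $\Z_2$-homology cobordism. Since $H_*(W,Y;\Z_2)=0$, the integral groups $H_*(W,Y;\Z)$ are finite of odd order, so $H_*(W,Y;\Q)=0$; the same holds from the $Y'$ side, and hence $W$ is in fact a $\Q$-homology cobordism, with $b_1(W)=b_2(W)=0$. Moreover $H_*(W;\Z_2)\cong H_*(S^3;\Z_2)$, so $H^2(W;\Z_2)=0$ and $w_2(W)=0$; thus $W$ carries a spin structure, whose induced $\Spinc$ structure $\mathfrak{s}_0$ restricts to the spin structure (the one labeled $0$, as $Y$ is a $\Z_2$-homology sphere) on $Y$ and to some $\Spinc$ structure $t_0'$ on $Y'$.

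Next I would produce $H$. Let $G=\ker\bigl(H_1(\partial W)\to H_1(W)\bigr)$. For a $\Q$-homology cobordism this is the standard metabolizer: $G=G^{\perp}$ for the linking form $-\lambda_Y\oplus\lambda_{Y'}$, whence $|G|^2=|H_1(\partial W)|$. As the linking form is the orthogonal sum of its $p$-primary pieces, $G_q=(G_q)^{\perp}$ inside the $q$-primary part, giving $|G_q|^2=|H_1(Y)_q|\cdot|H_1(Y')_q|$. Because $Y'$ is a $\Z_q$-homology sphere we have $H_1(Y')_q=0$, so every element of $G_q$ has trivial $Y'$-coordinate; hence $G_q\subseteq H_1(Y)_q$ and $|G_q|^2=|H_1(Y)_q|$. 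I set $H=G_q$ (when $q=2$ this is $0$, and there is nothing to prove).

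Finally I would kill $\bar d$ on $H$. Under Poincar\'e--Lefschetz duality the image of the restriction $r\colon H^2(W)\to H^2(\partial W)$ is $\mathrm{PD}(G)$, since $r$ corresponds to the connecting map $H_2(W,\partial W)\to H_1(\partial W)$, whose image is $G$. Given $s\in H=G_q$, the element $(s,0)\in G$ has odd order, so there is a unique $x\in G$ with $2x=(s,0)$; as $\operatorname{im}(r)=\mathrm{PD}(G)$, I may pick $\alpha\in H^2(W)$ with $\mathrm{PD}(r(\alpha))=x$, and then $\mathfrak{s}:=\mathfrak{s}_0+\alpha$ changes the $\mathrm{PD}\,c_1$-labels of $\mathfrak{s}_0|_{\partial W}$ by $2x=(s,0)$, so it restricts to the structure labeled $s$ on $Y$ and still to $t_0'$ on $Y'$. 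Invariance of the correction term under spin$^c$ $\Q$-homology cobordisms (Ozsv\'ath--Szab\'o~\cite{Ozsvath-Szabo:2003-2}, as used in~\cite{Hedden-Livingston-Ruberman:2010-01}, with $b_1(W)=b_2(W)=0$) then gives $d(Y,s)=d(Y',t_0')$ from $\mathfrak{s}$ and $d(Y,0)=d(Y',t_0')$ from $\mathfrak{s}_0$; subtracting yields $\bar d(Y,s)=0$. The main obstacle is precisely this matching of $\Spinc$ labelings through $W$: identifying $\operatorname{im}(r)$ with $\mathrm{PD}(G)$, using the spin structure as a basepoint that forces the two $Y'$-restrictions to coincide, and checking that the odd-order factor of two in the $\mathrm{PD}\,c_1$ labeling is harmless, so that the cobordism invariance of $d$ applies to a single $\Spinc$ structure restricting to $(s,t_0')$ and a companion restricting to $(0,t_0')$.
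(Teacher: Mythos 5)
Your proof is correct and follows essentially the same route as the paper's: both extract the metabolizer $H$ from the $q$-primary part of the kernel/image subgroup of $H_1(\partial W)$ (the paper phrases it as the image of $H_2(W,\partial W)\to H_1(\partial W)$ and cites Casson--Gordon for $|A|^2=|G|$, which is your linking-form computation), and both kill $\bar d$ on $H$ via the vanishing of $d$ on $\Spinc$ structures extending over the rational homology cobordism, using $(s,0)$ and $(0,0)$ and subtracting. Your treatment of the $\Spinc$ labeling---the spin basepoint and the odd-order factor of $2$ in the $\mathrm{PD}\,c_1$ identification---is more explicit than the paper's, but it is the same argument.
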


We note that
Proposition~\ref{proposition:obstruction-to-being-H-cob-to-(Z/q)HS}
easily applies to a 3-manifold $Y$ which is a boundary component of a
$\Z_2$-homology punctured 4-ball $W$ with $H_1(\partial
W-Y;\Z_q)=0$, by tunneling $W$ to join components of $\partial W-Y$.

\begin{proof}
  Suppose $Y'$ is a $\Z_q$-homology sphere and $Y$ is
  $\Z_2$-homology cobordant to~$Y'$.  Denote $G=H_1(Y)\oplus
  H_1(Y')=H_1(\partial W)$.  Let $A$ be the image of the boundary map
  $H_2(W,\partial W) \to G$ which is identified with $\Spinc(W)=H^2(W)
  \to H^2(\partial W)=\Spinc(\partial W)$ under Poincare duality.
  Namely, $A$ consists of the $\Spinc$-structures of $\partial W$ that
  extends to~$W$.  By Theorem 1.2 in 
  Ozsv\'ath-Szab\'o~\cite{Ozsvath-Szabo:2003-2}, $d(\partial W,(s,s'))
  = d(Y,s)-d(Y',s') =0$ whenever $(s,s')\in A \subset G$.  Also, by
  the argument of Casson-Gordon~\cite{Casson-Gordon:1986-1}, $|A|^2 =
  |G|$.

  Consider the $q$-primary parts $G_q$ and~$A_q$.  Since $|H_1(Y')|$
  is relatively prime to $q$, we have $G_q=H_1(Y)_q$ and $A_q \subset
  G_q \subset H_1(Y)$.  Also, $|A|^2=|G|$ implies $|A_q|^2 =
  |H_1(Y)_q|$, by using the remark before
  Proposition~\ref{proposition:obstruction-to-being-H-cob-to-(Z/q)HS}.

  Now, for $s\in A_q\subset H_1(Y)$, since $(s, 0) \subset A \subset
  G$, we have $d(Y,s)-d(Y',0)=0$.  In particular $d(Y,0)-d(Y',0)=0$.
  It follows that $\bar d(Y,s)=d(Y,s)-d(Y,0)=0$ for any $s\in
  H_1(Y)_q$.
\end{proof}

As remarked above, if $d$ is a power of $2$, then the $d$-fold branched cover of $S^3$ 
along a knot is a $\Z_2$-homology sphere.  Hence we may combine 
Theorem~\ref{theorem:Z/q-HS-obstruction-to-given-Alexander-poly} and
Propositions~\ref{proposition:1/n-surgery-along-knots-in-ZHS} and
\ref{proposition:obstruction-to-being-H-cob-to-(Z/q)HS}, to obtain the
following:

\begin{theorem}
  \label{theorem:d-obstruction-to-given-Alexander-poly}
  Suppose $L$ is smoothly concordant to a link $L'=K_1'\cup K_2'$
  satisfying $\Delta_{K_1'}(t) \in D$.  Suppose $Y$ is the
  $(a/b)$-surgery manifold of the $d$-fold covering knot $(Y_L,K_L)$
  of~$L$.  If $d$ is a power of two, $q\in \cP_d(D)$, and $a$ is
  relatively prime to $q$, then there is a subgroup $H$ of the
  $q$-primary part $\subset H_1(Y)_q$ such that $|H|^2 = |H_1(Y)_q|$
  and $\bar d(Y,s)=0$ for any $s\in H$.
\end{theorem}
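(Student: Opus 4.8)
The plan is to chain together the three results assembled just before the statement, specializing the prime $p$ to $2$. Because $d$ is a power of two, the branched cover $Y_L$ is a $\Z_2$-homology sphere, so the entire $\Spinc$ and $d$-invariant apparatus for $\Z_2$-homology spheres will be available for the surgery manifold $Y$, provided one first checks that $Y$ is again such a sphere. Accordingly the argument is essentially bookkeeping: produce a $\Z_2$-concordance to a knot in a $\Z_q$-homology sphere, push it through surgery to a $\Z_2$-homology cobordism, and read off the conclusion from the $d$-invariant obstruction.

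First I would invoke Theorem~\ref{theorem:Z/q-HS-obstruction-to-given-Alexander-poly} with $p=2$, which is legitimate since $d=2^r$ is a prime power. The hypothesis that $L$ is smoothly concordant to $L'$ with $\Delta_{K_1'}(t)\in D$ then gives that the $d$-fold covering knot $(Y_L,K_L)$ is smoothly $\Z_2$-concordant to a knot lying in a $\Z_q$-homology sphere, for every $q\in\cP_d(D)$. Next, as recorded in the remark preceding Proposition~\ref{proposition:1/n-surgery-along-knots-in-ZHS}, the covering knot $K_L$ of a link in $S^3$ has vanishing $(\Q/\Z)$-valued self-linking, so its $(a/b)$-surgery is well defined. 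I would apply Proposition~\ref{proposition:1/n-surgery-along-knots-in-ZHS} with $R=\Z_2$---replacing $\Z_2$ by the localization $\Z_{(2)}\subset\Q$ exactly as in that proof---to the $\Z_2$-concordance produced in the previous step. Since $a$ is relatively prime to $q$, the conclusion is that $Y$, the $(a/b)$-surgery manifold of $(Y_L,K_L)$, is $\Z_2$-homology cobordant to a $\Z_q$-homology sphere.

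The step needing the most care is confirming that $Y$ is \emph{itself} a $\Z_2$-homology sphere, which is exactly what allows Proposition~\ref{proposition:obstruction-to-being-H-cob-to-(Z/q)HS} to be applied; this does not follow automatically from being $\Z_2$-homology cobordant to a $\Z_q$-homology sphere, since for odd $q$ such a sphere may carry $2$-torsion in $H_1$. To secure it I would use the elementary surgery fact quoted at the end of the proof of Proposition~\ref{proposition:1/n-surgery-along-knots-in-ZHS} in the instance $q=2$: because $K_L$ sits in the $\Z_2$-homology sphere $Y_L$, the $(a/b)$-surgery manifold $Y$ is again a $\Z_2$-homology sphere precisely when $a$ is odd, so this is the point at which the relative-primality hypothesis on $a$ must be understood to cover the prime $2$ as well (automatic when $q=2$, and in particular in the case $D=\{1\}$ where every prime, including $2$, lies in $\cP_d(D)$). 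Once $Y$ is known to be a $\Z_2$-homology sphere that is $\Z_2$-homology cobordant to a $\Z_q$-homology sphere, Proposition~\ref{proposition:obstruction-to-being-H-cob-to-(Z/q)HS} yields directly a subgroup $H\subset H_1(Y)_q$ with $|H|^2=|H_1(Y)_q|$ and $\bar d(Y,s)=0$ for all $s\in H$, which is the desired conclusion.
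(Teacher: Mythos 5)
Your proposal matches the paper's own argument: the theorem is obtained there precisely by chaining Theorem~\ref{theorem:Z/q-HS-obstruction-to-given-Alexander-poly} with $p=2$, Proposition~\ref{proposition:1/n-surgery-along-knots-in-ZHS} with $R=\Z_2$ (i.e.\ $\Z_{(2)}$), and Proposition~\ref{proposition:obstruction-to-being-H-cob-to-(Z/q)HS}, exactly as you do. Your additional check that $a$ must be odd so that $Y$ is itself a $\Z_2$-homology sphere is a fair point the paper leaves implicit (it is automatic in the intended application, where $a=1$), but it does not alter the route.
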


We remark that if $D=\{1\}$, namely if $L$ is assumed to be concordant
to $L'$ with $\Delta_{K_1'}(t)=1$ in
Theorem~\ref{theorem:d-obstruction-to-given-Alexander-poly}, then the
condition ``$q\in \cP_d(D)$'' is replaced with ``$q$ is any prime'' in
the conclusion.

We also remark that our subgroup $H$ in
Theorem~\ref{theorem:d-obstruction-to-given-Alexander-poly} has the
property that the $(\Q/\Z)$-valued linking form of $Y$ vanishes on
$H\times H$.  In this paper we do not use this.

\section{Examples}
\label{section:examples}

Our main examples are of the following form.  Fix a knot~$J_0$.  For
an integer $m$ and a knot $J$, let $L(m,J)$ be the link illustrated in
Figure~\ref{figure:colink}.  Here $J$ is a knot that will be specified
later, and \fbox{$-m$} denotes negative $m$ full twists between the
obvious two vertical bands, in such a way that no self-twisting is
added on each band.  We remark that in the case of unknotted $J_0$,
the link $L(m,J)$ was first considered in \cite{Cochran-Orr:1993-1} to
give examples which are not concordant to boundary links.  In
\cite{Cochran-Orr:1993-1} it is shown that $L(m,J)$ is a homology
boundary link for unknotted~$J_0$.  The same method shows that
$L(m,J)$ is a homology boundary link for any~$J_0$.  Consequently
$L(m,J)$ has vanishing $\bar\mu$-invariants.

\begin{figure}[h]
  \begin{center}
    \labellist
    \normalsize\hair 0mm
    \pinlabel {$-m$} at 82 100
    \pinlabel {$J$} at 140 100
    \pinlabel {$K_1$} at 177 32
    \pinlabel {$K_2$} at -6 100
    \pinlabel {$J_0$} at 83 15
    \pinlabel {\footnotesize$\alpha$} at 35 35
    \pinlabel {\footnotesize$\beta$} at 130 33
    \endlabellist
    \includegraphics[scale=.7]{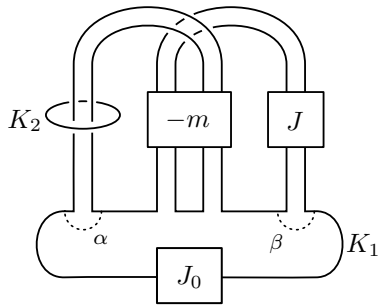}
  \end{center}
  \caption{The link $L(m,J)$.}
  \label{figure:colink}
\end{figure}

\begin{figure}[h]
 \begin{center}
   \labellist
   \small\hair 0mm
   \pinlabel {$-2m-1$} at 68 49
   \normalsize
   \pinlabel {$J\#J^r$} at 121 49
   \pinlabel {$K_{L(m,J)}$} at -8 38
   \pinlabel {$0$} at 61 4
   \pinlabel {$0$} at 121 4
   \endlabellist
   $\vcenter{\hbox{\includegraphics[scale=.9]{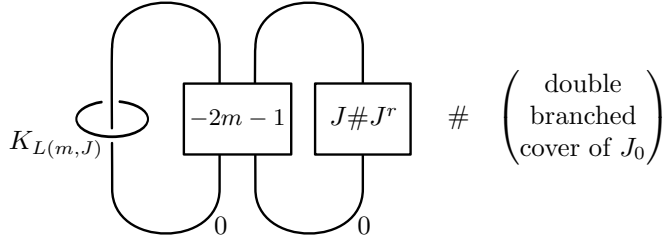}}}$
   \quad$\#$\quad$\left(
     \begin{tabular}{@{}c@{}}
       double \\
       branched\\
       cover of $J_0$
     \end{tabular}
   \right)$
 \end{center}
 \caption{The covering knot $K_{L(m,J)}$ in $Y_{L(m,J)}$}
 \label{figure:cocovering}
\end{figure}

Figure~\ref{figure:cocovering} illustrates a surgery diagram of the
2-fold covering knot $(Y_{L(m,J)}, K_{L(m,J)})$, which is obtained
from Figure~\ref{figure:colink} by applying the technique of Akbulut
and Kirby~\cite{Akbulut-Kirby:1979-1}.  The ambient space $Y_{L(m,J)}$
is the connected sum of the double branched cover of $J_0$ and the
double branched cover of~$K_1$.  The latter is the result of surgery
on $S^3$ along the two 0-framed circles in
Figure~\ref{figure:cocovering}.

Note that $K_1$ and $J_0$ are smoothly concordant, as knots.  One way
to see this is as follows: if one performs a band surgery on $K_1$
along the dotted arc $\beta$ in Figure~\ref{figure:colink} (and
forgets $K_2$), then $K_1$ becomes a split union of $J_0$ and an
unknotted circle.

\subsection{Topological examples}

In this subsection we prove the following result:

\begin{theorem}
  \label{theorem:topological-examples}
  For any finite collection $D$ of classical Alexander polynomials of
  knots and for any knot $J_0$ with $\Delta_{J_0}(t)\in D$, there are
  links $L=K_1\cup K_2$ satisfying the following:
  \begin{enumerate}
  \item The first component $K_1$ of $L$ is smoothly concordant to the
    given~$J_0$.
  \item $L$ is not topologically concordant to any link $L'=K'_1\cup
    K'_2$ with $\Delta_{K'_1}(t)\in D$.
  \end{enumerate}
\end{theorem}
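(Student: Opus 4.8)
The plan is to construct explicit links of the form $L=L(m,J)$ from Figure~\ref{figure:colink} and verify the two conclusions using the machinery of Section~\ref{section:obstructions}. Conclusion~(1) is essentially free: as observed immediately after Figure~\ref{figure:cocovering}, performing the band surgery along the arc $\beta$ converts $K_1$ into the split union of $J_0$ and an unknot, so $K_1$ is smoothly---hence topologically---concordant to $J_0$, and this holds for every choice of $m$ and~$J$. The real work is conclusion~(2), for which I would argue by contradiction: assuming $L$ is topologically concordant to some $L'=K_1'\cup K_2'$ with $\Delta_{K_1'}(t)\in D$, I would apply the topological case of Theorem~\ref{theorem:complexity-obstruction-to-given-Alexander-poly} to deduce that the $d$-fold covering knot $(Y_L,K_L)$ is topologically $\Z_p$-concordant to a knot whose complexity is relatively prime to every $q\in\cP_d(D)$.

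First I would fix the branched-cover parameter; the natural choice is $d=2$, so that by Figure~\ref{figure:cocovering} the covering knot $(Y_{L(m,J)},K_{L(m,J)})$ sits inside the connected sum of the double branched cover of $J_0$ with a lens-space-type summand obtained by surgery on the two $0$-framed circles, and the covering knot itself is visibly $J\#J^r$ sitting in a surgery description with the box $-2m-1$. Next I would compute (or read off from \cite{Cha:2003-1,Cha-Ko:2000-1}) the complexity and the associated rational-concordance obstructions---the signature-period or Seifert-matrix invariants of Cochran--Orr \cite{Cochran-Orr:1993-1,Cha-Ko:2000-1} and the torsion invariants of \cite[Theorem~4.17]{Cha:2003-1}---for this covering knot. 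The key quantitative point is that $|H_1(Y_L)|$ is controlled by $\cR_d(\Delta_{J_0})$, so one can choose a prime $q$ dividing this order with $q\in\cP_d(D)$ (possible since all but finitely many primes lie in $\cP_d(D)$), and then the meridian's complexity will be divisible by~$q$. This divisibility directly contradicts the conclusion ``complexity relatively prime to all $q\in\cP_d(D)$'' forced by the assumed concordance.

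The main obstacle is arranging the auxiliary knot $J$ (and the twisting parameter $m$) so that the resulting covering knot is genuinely obstructed from being $\Q$-concordant to a low-complexity knot, while keeping $(Y_L,K_L)$ tractable enough to compute its invariants. This requires choosing $J$ so that $J\#J^r$ has a nonvanishing rational-concordance invariant in the relevant sense---realized inside the nontrivial homology sphere $Y_{L(m,J)}$ coming from the double branched cover of $J_0$---and checking that this invariant survives $\Z_p$-concordance. I expect to invoke the realization and nonvanishing results of \cite{Cha:2003-1} to guarantee such a $J$ exists for the prescribed~$D$ and~$J_0$, and to verify that the homology-boundary-link property noted after Figure~\ref{figure:colink} (vanishing $\bar\mu$-invariants) does not interfere, so that the covering-link obstruction is the operative one. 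Assembling these pieces---the free conclusion~(1), the covering-knot computation, and the complexity contradiction---completes the proof.
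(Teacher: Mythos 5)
Your skeleton matches the paper's proof: take $L=L(m,J)$, get conclusion (1) for free from the band surgery along $\beta$, pass to the $2$-fold covering knot, and aim to contradict Theorem~\ref{theorem:complexity-obstruction-to-given-Alexander-poly}. But the step where you actually derive the contradiction does not work. That theorem concludes that $K_L$ is $\Z_p$-concordant \emph{to some knot} whose complexity is prime to every $q\in\cP_d(D)$; it says nothing about the complexity of $K_L$ itself. Complexity is not an invariant of rational concordance, so establishing that the complexity (of the meridian) of $K_L$ is divisible by $q$ contradicts nothing. What is needed is a genuine rational-concordance invariant certifying that $K_L$ cannot be rationally concordant to \emph{any} knot of complexity prime to $q$. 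The paper supplies this via Theorem~\ref{theorem:complexity-and-signature-period}: if $K_L$ were rationally concordant to a knot of complexity $c$, the signature jump function $\delta_{K_L}$ would have period $c$. One then takes $J$ with $\delta_J$ of minimal period one (e.g.\ the trefoil), an odd prime $q\in\cP_2(D)$, and $m=(q-1)/2$, and uses the realization formula $\delta_{K_L}(\theta)=\delta_{J\#J^r}(\theta/q)$ to see that $\delta_{K_L}$ has minimal period exactly $q$, hence admits no period prime to $q$. You mention the signature-period and torsion invariants only as things you ``would compute,'' and you defer the choice of $J$ to unspecified realization results; that deferred step is the actual content of the proof, and without it the argument is circular.

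There is also an arithmetic inconsistency in your choice of $q$: you propose to take $q$ dividing $|H_1(Y_L)|$ as ``controlled by $\cR_d(\Delta_{J_0})$'' while also requiring $q\in\cP_d(D)$. Since $\Delta_{J_0}\in D$, no prime dividing $\cR_2(\Delta_{J_0})$ lies in $\cP_2(D)$ by definition, so such a $q$ cannot come from the double branched cover of $J_0$; it must come from the other summand of $H_1(Y_L)$, namely the $(\Z_{2m+1})^2$ arising from the two $0$-framed circles in Figure~\ref{figure:cocovering}. This is precisely why the paper fixes $q\in\cP_2(D)$ first and then sets $m=(q-1)/2$: the twisting parameter $m$, not the polynomial $\Delta_{J_0}$, is what injects $q$ into the complexity and into the period of the signature function of the covering knot.
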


Note that our example $L(m,J)$ in Figure \ref{figure:colink} satisfies
Theorem~\ref{theorem:topological-examples}~(1) as observed above.  In
what follows we will show that for an appropriate choice of $J$ that
will be given later $L(m,J)$ satisfies
Theorem~\ref{theorem:topological-examples}~(2).

We recall that for a knot $K$ in a rational homology sphere $Y$ with
vanishing $(\Q/\Z)$-valued self-linking, there is a signature
invariant of~$K$ \cite{Cochran-Orr:1993-1,Cha-Ko:2000-1,Cha:2003-1}.
For readers who are not familiar with this, we give a description in
terms of Seifert matrices, which was first given
in~\cite{Cha-Ko:2000-1}.  (The description below differs from that in~\cite{Cha-Ko:2000-1} by a reparametrization by a factor of~$2\pi$.)

It is known that if $K$ has vanishing $(\Q/\Z)$-valued linking, then
there is a \emph{generalized Seifert surface} for $K$, which is
defined to be an properly embedded surface $F$ in the exterior of $K$
whose boundary consists of $c$ parallel copies of $K$ taken along the
$0$-framing.  It is known that there exists such $F$ if and only if
$c$ is a multiple of the complexity of~$K$ (see \cite{Cha-Ko:2000-1},
\cite[Chapter~2]{Cha:2003-1}).  A Seifert matrix $A$ for the
generalized Seifert surface $F$ is defined as usual, using the
$\Q$-valued linking number in $Y$ in place of the ordinary linking
number.  Then for $\theta\in \R$, $\delta_K(\theta)$ is defined to be
the jump at $t=\theta/c$ of the signature function
\[
\sigma_A(t)= \sign\big((1-e^{2\pi t\sqrt{-1}})A+(1-e^{-2\pi
  t\sqrt{-1}})A^T\big).
\]
We regard $\delta_K$ as a function $\R\to \Z$.  It is known that
$\delta_K$ is an invariant under rational concordance
\cite{Cochran-Orr:1993-1,Cha-Ko:2000-1,Cha:2003-1}.  We note that the
reparametrization by the factor of $1/c$ given in the definition is
essential in proving the invariance.

For our purpose the following property is useful.  We say that
$\delta_K$ has period $c$ if $\delta_K(\theta)=\delta_K(\theta+c)$ for
all~$\theta$. 

\begin{theorem}[\cite{Cochran-Orr:1993-1,Cha-Ko:2000-1,Cha:2003-1}]
  \label{theorem:complexity-and-signature-period}
  If a knot $K$ in a rational homology sphere is rationally concordant
  to a knot with complexity $c$, then $\delta_K(\theta)$ has
  period~$c$.
\end{theorem}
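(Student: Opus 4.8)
The plan is to reduce the claim to a one-line periodicity computation by exploiting the invariance of $\delta_K$ under rational concordance, which was recalled just before the statement. Since $K$ is rationally concordant to a knot $K'$ of complexity $c$, invariance gives $\delta_K = \delta_{K'}$, so it suffices to show that $\delta_{K'}$ has period $c$. In other words, the entire argument can be carried out for a single knot whose complexity is exactly $c$.

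First I would choose a generalized Seifert surface $F'$ for $K'$ whose boundary consists of exactly $c$ parallel copies of $K'$. Such an $F'$ exists by the cited characterization: the number of boundary components of a generalized Seifert surface must be a multiple of the complexity, and here the complexity equals $c$, so the minimal value $c$ is attainable. Let $A'$ be the resulting Seifert matrix. With this choice the number appearing in the reparametrization coincides with the complexity, so by definition $\delta_{K'}(\theta)$ is the jump of the step function $\sigma_{A'}(t)$ at $t = \theta/c$.

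The key observation is that $\sigma_{A'}(t)$ depends on $t$ only through the quantities $e^{\pm 2\pi t\sqrt{-1}}$ and is therefore periodic in $t$ with period $1$; consequently the positions and magnitudes of all its jumps repeat with period $1$ in $t$. Under the substitution $t = \theta/c$ a period of $1$ in $t$ becomes a period of $c$ in $\theta$: indeed $\delta_{K'}(\theta + c)$ is the jump of $\sigma_{A'}$ at $(\theta + c)/c = \theta/c + 1$, which agrees with the jump at $\theta/c$, namely $\delta_{K'}(\theta)$. Combining with the first paragraph, $\delta_K = \delta_{K'}$ has period $c$, as desired.

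The step requiring the most care is the choice of surface in the second paragraph, where the complexity is realized as the exact number of boundary components; this is also the place where the reparametrizing factor $1/c$ built into the definition of $\delta$ does its work, converting the intrinsic period $1$ of the Seifert signature function into the geometric period $c$ governed by the complexity. Once the cited invariance of $\delta$ and the cited existence criterion for generalized Seifert surfaces are granted, the remaining verification is purely formal.
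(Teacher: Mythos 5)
The paper does not give its own proof of this theorem; it quotes the result from \cite{Cochran-Orr:1993-1,Cha-Ko:2000-1,Cha:2003-1}. Your argument is correct and is essentially the standard proof found in those references: reduce via the concordance invariance of $\delta$ to a knot whose complexity is exactly $c$, realize $c$ as the number of boundary components of a generalized Seifert surface (permitted since $c$ is a multiple of itself), and observe that the period-$1$ dependence of $\sigma_{A'}(t)$ on $e^{2\pi t\sqrt{-1}}$ becomes period $c$ in $\theta$ under the reparametrization $t=\theta/c$.
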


We remark that since the set $\{\theta\in \R\mid \delta_K(\theta)\ne
0\}$ of non-vanishing points is discrete, there is a minimal period,
say $c_0>0$, of $\delta_K(\theta)$ such that any period of
$\delta_K(\theta)$ is an integer multiple of~$c_0$.  (For, otherwise
the set of periods of $\delta_K$ is dense in $\R$.)

\begin{proof}[Proof of Theorem~\ref{theorem:topological-examples}]
  As observed in the beginning of this section, the first component
  $K_1$ of our $L(m,J)$ is concordant to~$J_0$, regardless of the
  choice of~$J$.  This shows $(1)$.

  Choose a knot $J$ whose signature jump function $\delta_J(\theta)$
  has minimal period one.  For example, one can take the trefoil knot
  $3_1$ as $J$, since $\delta_{3_1}(\theta)=\pm 2$ exactly for
  $\theta\equiv \pm 1/6 \pmod{1}$.  We note that many more such knots
  exist, e.g., by appealing to \cite{Cha-Livingston:2002-1}.

  Choose an odd prime $q\in \cP_2(D)$ and let $m=(q-1)/2$.  Consider
  the link $L=L(m,J)$ and the 2-fold covering knot $(Y_L,K_L)$ of~$L$.
  The realization argument of \cite[Section 4.1.2]{Cha:2003-1} (see
  also \cite[Example, p.~1179]{Cha-Ko:2000-1}) shows that
  $\delta_{K_L}(\theta)=\delta_{J\#J^r}(\theta/q)$.  Since
  $\delta_{J\#J^r}(\theta) = \delta_{J}(\theta)+\delta_{J^r}(\theta) =
  2\delta_{J}(\theta)$, it follows that $\delta_{K_L}(\theta)$ has
  minimal period~$q$.

  Therefore, no period of $\delta_{K_L}(\theta)$ is relatively
  prime to $q$.  By
  Theorem~\ref{theorem:complexity-and-signature-period}, $K_L$ is not
  rationally concordant to any knot whose complexity is relatively
  prime to~$q$. By
  Theorem~\ref{theorem:complexity-obstruction-to-given-Alexander-poly}
  it follows that $L$ is not concordant to any link $L'=K'_1\cup K'_2$
  for which $\Delta_{K'_1}(t) \in D$.
\end{proof}

Theorem~\ref{theorem:main-top} stated in the introduction is an
immediate consequence of the special case of
Theorem~\ref{theorem:topological-examples} for $D=\{1\}$ and $J_0=$
unknot.

\begin{remark}
  Another class of links that may be shown to satisfy
  Theorem~\ref{theorem:main-top} by our method is given in
  \cite[Figure~2]{Cha-Ko:2006-1}.  In the proof of
  \cite[Theorem~3.3]{Cha-Ko:2006-1}, it was shown that these links
  have covering links which are not concordant to a complexity one
  knot.  Therefore, our method shows that these satisfy
  Theorem~\ref{theorem:main-top}.  An interesting property of the
  links given in \cite[Figure~2]{Cha-Ko:2006-1} is that these are
  mutants of ribbon links.  Therefore these links are not
  distinguished from smoothly slice links by any invariants preserved
  under mutation.
\end{remark}

\begin{remark}
  After we had announced our main results, Charles Livingston informed
  us that he found an alternative approach using Casson-Gordon
  invariants as in \cite{Livingston:1990-1} to show
  Theorem~\ref{theorem:main-top} for the same
  examples.
\end{remark}

\subsection{Topologically slice smooth examples}

\begin{theorem}
  \label{theorem:topologically-slice-smooth-examples}
  For any finite collection $D$ of classical Alexander polynomials of
  knots and for any knot $J_0$ with $\Delta_{J_0}(t)\in D$, there are
  links $L=K_1\cup K_2$ satisfying the following:
  \begin{enumerate}
  \item $L$ is topologically concordant to the split union of $J_0$
    and an unknotted circle.
  \item The first component $K_1$ of $L$ is smoothly concordant to the
    given~$J_0$.
  \item $L$ is not smoothly concordant to any link $L'=K'_1\cup
    K'_2$ with $\Delta_{K'_1}(t)\in D$.
  \end{enumerate}
\end{theorem}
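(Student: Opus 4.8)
The plan is to run the same construction $L=L(m,J)$ of Figure~\ref{figure:colink}, but now to choose for $J$ a \emph{topologically slice} knot whose smooth Heegaard Floer invariants are nontrivial; the precise requirement, namely $V_0(J\#J^r)>0$ for the correction-term invariant $V_0$, is dictated by the computation described below, and topologically slice Whitehead doubles (as in the examples of Hedden--Livingston--Ruberman) provide such $J$. Condition~(2) then holds automatically: the band move along $\beta$ shows $K_1$ is smoothly concordant to $J_0$ regardless of $J$, exactly as in the proof of Theorem~\ref{theorem:topological-examples}. For condition~(1), observe that $L(m,J)$ depends on $J$ only through tying $J$ into a band; since $J$ is topologically slice, pushing a locally flat slice disk into $S^3\times[0,1]$ exhibits a topological concordance from $L(m,J)$ to $L(m,U)$, which is readily seen to be concordant to the split union $J_0\sqcup U$ by the band argument used for~(2).

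For condition~(3) I would apply the smooth obstruction of Theorem~\ref{theorem:d-obstruction-to-given-Alexander-poly} with $d=2$. Fix an odd prime $q\in\cP_2(D)$ and set $m=(q-1)/2$, so that the twist region carrying the companion $J\#J^r$ in Figure~\ref{figure:cocovering} has coefficient $-2m-1=-q$. Let $Y$ be the $(a/b)$-surgery manifold of the $2$-fold covering knot $(Y_L,K_L)$, with $a$ odd and relatively prime to $q$ (for instance $a/b=1/n$), so that $Y$ is a $\Z_2$-homology sphere. Since $q$ divides the complexity of $K_L$---as the signature computation of the topological proof shows---the surgery produces nontrivial $q$-torsion in $H_1(Y)$; moreover $H_1(Y)_q$ is a perfect square, which is exactly what is forced by condition~(1) through the Casson--Gordon-type argument underlying Proposition~\ref{proposition:obstruction-to-being-H-cob-to-(Z/q)HS}. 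The content of~(3) is that this perfect-square shape is compatible with topological sliceness, yet the smooth correction terms will obstruct.

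The heart of the argument is the correction-term computation. Writing $Y_L\cong\Sigma_2(J_0)\#\Sigma_2(K_1)$ for the relevant double branched covers and using the connected-sum formula $d(A\#B,\,s\oplus t)=d(A,s)+d(B,t)$, the contribution of the fixed summand $\Sigma_2(J_0)$ cancels in the difference $\bar d$, so the $\bar d$-invariants of $Y$ are governed entirely by the surgery along $J\#J^r$. Applying the rational surgery formula for correction terms, I would express these $\bar d$-values through the invariants $V_j(J\#J^r)$; the hypothesis $V_0(J\#J^r)>0$ makes $\bar d$ nonzero on the relevant $\Spinc$ structures. The goal is then to check that the nonzero values are distributed so that \emph{no} order-$q$ subgroup $H\subset H_1(Y)_q$ has $\bar d\equiv0$ on all of $H$, contradicting the conclusion of Theorem~\ref{theorem:d-obstruction-to-given-Alexander-poly} and hence establishing~(3).

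The main obstacle is precisely this last verification: ruling out every candidate metabolizer $H$, while keeping $H_1(Y)_q$ a genuine perfect square so that the purely homological obstruction does not already fire (which would contradict the topological sliceness demanded by~(1)). Pinning down the signs and sizes of the correction terms across all the $\Spinc$ structures that a metabolizer could meet is delicate, and making them unambiguously nonzero is likely to require boosting $V_0(J\#J^r)$---for example by taking $J$ to be a connected sum of many copies of a topologically slice knot with positive $V_0$---so that the obstruction persists for every subgroup that must be excluded.
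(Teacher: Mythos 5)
Your overall strategy coincides with the paper's: take $L=L(m,J)$ with $J$ topologically slice, fix an odd prime $q\in\cP_2(D)$ and $m=(q-1)/2$, pass to the $2$-fold covering knot, do a surgery, and apply Theorem~\ref{theorem:d-obstruction-to-given-Alexander-poly} via a correction-term computation on the summand coming from $J\#J^r$. Parts (1) and (2) are handled exactly as in the paper. But the step you yourself flag as the ``main obstacle''---ruling out every candidate subgroup $H$ with $|H|^2=|H_1(Y)_q|$---is left unresolved, and this is precisely where the paper's proof has content that your proposal is missing.

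The point you miss is that the surgery coefficient should be chosen to make the metabolizer \emph{unique}, so that no case analysis over subgroups is needed. The paper performs $+1$-surgery on $K_L$ (with respect to the zero-framing determined by the $\Q$-valued linking number, which one must check agrees with the apparent zero-framing in Figure~\ref{figure:cocovering}); a Kirby-calculus manipulation then identifies $Y$ with $M_0\# M$, where $M_0=\Sigma_2(J_0)$ and $M$ is the $(2m+1)^2$-surgery on $T(2m+1,2m)\#J\#J^r$. Hence $H_1(Y)_q=H_1(M)\cong\Z_{q^2}$ is \emph{cyclic} of order $q^2$, so the only subgroup $H$ with $|H|^2=q^2$ is $q\Z_{q^2}$, and the obstruction reduces to showing $\bar d(M,s)\neq 0$ for the single nonzero element $s=q$ of that subgroup (the $M_0$ contribution cancels in $\bar d$, as you note). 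That one value is supplied by \cite[Theorem~6.4]{Hedden-Livingston-Ruberman:2010-01}, which gives $\bar d(M,q)\ge 2$ when $J$ is the connected sum of $(3m+1)/2$ copies of the positive Whitehead double of the right-handed trefoil---this is the quantitative ``boosting of $V_0$'' you anticipate, but with the number of summands pinned down so that the cited theorem applies. Without the observation that $H_1(Y)_q$ is cyclic (which fails for a generic choice of surgery slope, where $H_1(Y)_q$ could be non-cyclic and admit several square-root-order subgroups), your plan would indeed require controlling $\bar d$ on all $\Spinc$ structures met by every candidate metabolizer, and you give no argument for that; so as written the proof of (3) is incomplete.
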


\begin{proof}
  Again we consider $L=L(m,J)$, now with the extra condition that $J$
  be topologically slice.  As before, $L$ satisfies~(2).  Also note
  that $L$ satisfies (1), since one obtains the split union $J_0 \cup
  (\text{unknot}) \cup K_2$ from $L(m,\text{unknot})$ by a band
  surgery on $K_1$ along the dotted arc $\alpha$ in
  Figure~\ref{figure:colink}.

  Let $Y=Y(m,J)$ be the 3-manifold obtained by performing $1$-surgery
  on $Y_{L}$ along $K_{L}$.  Recall that we need to be careful with
  the framing: the surgery coefficient is determined with respect to
  the zero-framing on $K_{L}$ in $Y_{L}$, which is defined in terms of
  the $\Q$-valued linking number in $Y_{L}$, as in the previous
  section.  In our case the zero-framing on $K_{L}$ in $Y_{L}$ is
  identical with the usual zero-framing on $K_{L}$ in the surgery
  description of $Y_{L}$ given in Figure~\ref{figure:cocovering}.
  This can be verified, for example, by computing the $\Q$-valued
  linking number of $K_{L}$ and its preferred longitude using a
  formula given in \cite[Theorem~3.1]{Cha-Ko:2000-1}.  Alternatively,
  a surface in $Y_{L}$ which is bounded by $(2m+1)$ parallel copies of
  $K_{L}$ is obtained from a capped-off Seifert surface for $J\#J^r$
  by attaching $(2m+1)$ tubes, and then the framing induced by this
  surface, which is the zero-framing on $K_{L}$ in $Y_{L}$, is seen to
  be equal to the ordinary zero-framing.
  
  Therefore Figure~\ref{figure:cocovering} with surgery coefficient 1
  on $K_{L}$ becomes a surgery diagram for the $1$-surgery
  manifold~$Y$.  Let $M_0$ be the double branched cover of~$J_0$, and
  let $M$ be the $(2m+1)^2$-surgery manifold of the knot
  $T(2m+1,2m)\#J\#J^r$.  Here $T(a,b)$ denotes the $(a,b)$-torus knot.
  As illustrated in Figure~\ref{figure:1-surgery}, one sees that $Y$
  is diffeomorphic to $M_0 \# M$.

  \begin{figure}[H]
    \begin{center}
      \labellist
      \small\hair 0mm
      \pinlabel {$-2m-1$} at 63 49
      \pinlabel {$1$} at 3 59
      \normalsize\pinlabel {$J\#J^r$} at 116 49
      \small
      \pinlabel {$0$} at 55 4
      \pinlabel {$0$} at 115 4
      \normalsize\pinlabel {$J\#J^r$} at 263 60
      \small
      \pinlabel {$-1$} at 214 4
      \pinlabel {$0$} at 245 87
      \normalsize\pinlabel {$J\#J^r$} at 398 56
      \small
      \pinlabel {$+1$} at 327 46
      \pinlabel {$(2m+1)^2$} at 375 8
      \pinlabel {\begin{tabular}{c}$2m+1$\\strands\end{tabular}} at 302 16
      \endlabellist
      \includegraphics[scale=.9]{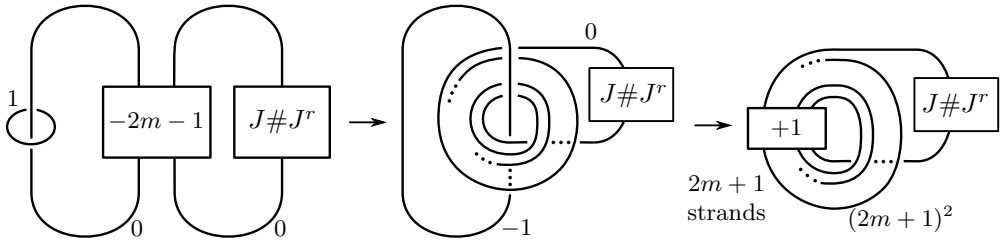}
    \end{center}
    \caption{1-surgery along $K_{L}$.}
    \label{figure:1-surgery}
  \end{figure}

  Now we specify $m$ and $J$.  For the given collection $D$, choose an
  odd prime $q\in \cP_2(D)$, and let $m=(q-1)/2$.  Let $J$ be the
  connected sum of $(3m+1)/2$ copies of the positive Whitehead double
  of the right-hand trefoil knot.  The knot $J$ is topologically slice
  by Freedman's theorem~\cite{Freedman:1984-1}.

  It is easily seen that $H_1(Y)=H_1(M_0)\oplus H_1(M)$ and
  $H_1(M)=(\Z_{2m+1})^2=(\Z_q)^2$.  Since $q\in \cP_2(D)$ and
  $\Delta_{J_0}(t)\in D$, $|H_1(M_0)|$ is relatively prime to~$q$ by
  our definition of $\cP_2(D)$ (see the discussion in
  Section~\ref{section:obstructions}).  Therefore $H_1(Y)_q = H_1(M)$,
  and the subgroup $H=q\Z_{q^2}$ generated by $q\in H_1(M)=\Z_{q^2}$ is
  the unique subgroup satisfying $|H|^2=|H_1(Y)_q|$.  By
  Theorem~\ref{theorem:d-obstruction-to-given-Alexander-poly}, for any
  $s\in H \subset H_1(Y)$, we have $\overline d(Y,s)=0$ if $L$ is
  concordant to a link $L'=K'_1\cup K'_2$ satisfying $\Delta_{K'_1}(t)
  \in D$.  Note that $\overline d(Y,s)=\overline d(M,s)$ since $s$
  lies in $H_1(M) \subset H_1(M_0)\oplus H_1(M)=H_1(Y)$.

  In \cite[Theorem~6.4]{Hedden-Livingston-Ruberman:2010-01}, Hedden,
  Livingston and the second author proved that for the above $J$,
  $\overline d(M,q)\ge 2$.  Since $q\in H$, the conclusion follows
  from this.
\end{proof}

Theorem~\ref{theorem:main-smooth} in the introduction is an immediate
consequence of the special case of
Theorem~\ref{theorem:topologically-slice-smooth-examples} for
$D=\{1\}$ and $J_0=$ unknot.

\section{Rational homology cobordism group and $\Z_p$-homology spheres}
\label{section:rational-homology-cobordism}

In \cite{Hedden-Livingston-Ruberman:2010-01}, a calculation of
$d$-invariants is used to elucidate the structure of smooth rational
homology cobordism group modulo integral homology spheres.  In this
section, we investigate the more general case of $\Z_p$-homology
spheres instead of integral homology spheres.  For this purpose we use
Proposition~\ref{proposition:obstruction-to-being-H-cob-to-(Z/q)HS},
which gives obstructions to being rational homology cobordant to
$\Z_p$-homology spheres.

For simplicity we first consider the case of $\Z_2$-homology spheres.
Let $\Omega$ be the group of smooth rational homology cobordism
classes of $\Z_2$-homology spheres.  Let $\Omega_T$ be the kernel of
the natural homomorphism of $\Omega$ into its topological analogue,
namely, $\Omega_T$ is the subgroup in $\Omega$ that consists of all
$\Z_2$-homology spheres which are topologically rational homology
cobordant to $S^3$.  For a commutative ring $R$, let
$\Omega_T^{R}$ be the subgroup of $\Omega_T$ generated by (the classes
of) $R$-homology spheres in~$\Omega_T$.

\begin{theorem}
  \label{theorem:rational-homology-cobordism-mod-Z_q-spheres}
  For any odd prime $q$, $\Omega_T / \Omega^{\Z_q}_T$ is nontrivial.
\end{theorem}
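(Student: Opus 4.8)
The plan is to produce, for the given odd prime $q$, a single $\Z_2$-homology sphere $Y$ whose class lies in $\Omega_T$ but not in $\Omega_T^{\Z_q}$; nontriviality of the quotient is then immediate. The natural candidate is the manifold $Y=Y(m,J)$ built in the proof of Theorem~\ref{theorem:topologically-slice-smooth-examples}, specialized to $J_0=$ unknot and $D=\{1\}$. With this choice $\cP_2(D)$ is the set of all primes, so the present $q$ is admissible with $m=(q-1)/2$, and $J$ is a connected sum of copies of the positive Whitehead double of the trefoil (hence topologically slice). Since $M_0=S^3$, we have $Y=M$, the $q^2$-surgery manifold of $T(q,q-1)\#J\#J^r$, with $H_1(Y)=\Z_{q^2}$; as $q$ is odd, $Y$ is a $\Z_2$-homology sphere and $[Y]$ is defined in $\Omega$. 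I would then verify the two assertions $[Y]\in\Omega_T$ and $[Y]\notin\Omega_T^{\Z_q}$.

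For $[Y]\in\Omega_T$ I would exploit the topological sliceness provided by Freedman's theorem~\cite{Freedman:1984-1}. Since $J$ is topologically slice, $L=L(m,J)$ is, by Theorem~\ref{theorem:topologically-slice-smooth-examples}(1), topologically concordant to the split union of the unknot $J_0$ and an unknotted circle, i.e.\ to the two-component unlink, so $L$ is a topologically slice link. Passing to the double branched cover of $B^4$ along a locally flat slice disk for $K_1$ produces a \emph{topological rational homology ball} $B_L$ with $\partial B_L=Y_L$, and (because the linking number is zero) a slice disk for $K_2$ lifts to a locally flat disk in $B_L$ bounded by the covering knot $K_L$. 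Removing an interior standard ball exhibits $(Y_L,K_L)$ as topologically $\Q$-concordant to the unknot in $S^3$, and Proposition~\ref{proposition:1/n-surgery-along-knots-in-ZHS} (topological case, $R=\Q$, the surgery coefficient $1$ being coprime to $q$) then shows that $Y$, the $1$-surgery on $K_L$, is topologically rationally homology cobordant to the $1$-surgery on the unknot, namely $S^3$. Thus $[Y]\in\Omega_T$.

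For $[Y]\notin\Omega_T^{\Z_q}$ I would first reduce to a single target. Any element of $\Omega_T^{\Z_q}$ is a $\Z$-linear combination of classes of $\Z_q$-homology spheres, and since connected sums and orientation reversals of $\Z_q$-homology spheres are again $\Z_q$-homology spheres, every such class is represented by a single $\Z_q$-homology sphere. Hence $[Y]\in\Omega_T^{\Z_q}$ would force $Y$ to be smoothly rationally homology cobordant to some $\Z_q$-homology sphere $N$. I would then invoke the $d$-invariant obstruction: the proof of Proposition~\ref{proposition:obstruction-to-being-H-cob-to-(Z/q)HS} uses only that the cobordism is a rational homology cobordism (the Casson--Gordon count $|A|^2=|G|$ and the Ozsv\'ath--Szab\'o vanishing of $d$ on extendable $\Spinc$ structures both hold in that generality), so it produces a subgroup $H\subset H_1(Y)_q=\Z_{q^2}$ with $|H|^2=|H_1(Y)_q|=q^2$ and $\bar d(Y,s)=0$ for all $s\in H$. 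The unique order-$q$ subgroup of $\Z_{q^2}$ is $H=q\Z_{q^2}$, which contains the element $q$; but $\bar d(Y,q)=\bar d(M,q)\ge 2$ by \cite[Theorem~6.4]{Hedden-Livingston-Ruberman:2010-01}, a contradiction. Therefore $[Y]\notin\Omega_T^{\Z_q}$, and $[Y]$ is a nonzero class in $\Omega_T/\Omega_T^{\Z_q}$.

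The main obstacle, and the point I would treat most carefully, is the coefficient bookkeeping: both halves must yield genuine \emph{rational} homology cobordisms rather than merely $\Z_2$-homology cobordisms, because a rational homology cobordism between $\Z_2$-homology spheres need not be a $\Z_2$-homology cobordism. This is precisely why topological sliceness (Freedman) is essential in the first step, where the double branched cover of an honest locally flat slice disk is a rational homology ball, and why in the second step I would re-examine the proof of Proposition~\ref{proposition:obstruction-to-being-H-cob-to-(Z/q)HS} to confirm it operates for rational homology cobordisms. Once these coefficient issues are settled, the reduction of $\Omega_T^{\Z_q}$ to a single $\Z_q$-homology sphere together with the single inequality $\bar d(M,q)\ge 2$ completes the argument.
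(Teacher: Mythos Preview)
Your proposal is correct and follows essentially the same route as the paper: take the manifold $M$ from the proof of Theorem~\ref{theorem:topologically-slice-smooth-examples} (with $J_0$ unknotted, so $Y=M$), use the $\bar d$-inequality $\bar d(M,q)\ge 2$ together with Proposition~\ref{proposition:obstruction-to-being-H-cob-to-(Z/q)HS} to rule out a rational homology cobordism to any $\Z_q$-homology sphere, and conclude that $[M]$ is nonzero in $\Omega_T/\Omega_T^{\Z_q}$. The paper's own proof is a terse two sentences citing exactly these ingredients; you have supplied the details it leaves implicit, notably the verification that $[M]\in\Omega_T$ via topological sliceness and Proposition~\ref{proposition:1/n-surgery-along-knots-in-ZHS}, and the reduction of an arbitrary element of $\Omega_T^{\Z_q}$ to a single $\Z_q$-homology sphere. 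Your flag about checking that Proposition~\ref{proposition:obstruction-to-being-H-cob-to-(Z/q)HS} applies to a merely \emph{rational} homology cobordism is well taken---the paper invokes it without comment---and the point is easily handled here since $H_1(M)=\Z_{q^2}$ is entirely $q$-primary and $q$ is odd (so conjugation symmetry forces the coset of extendable $\Spinc$ structures to contain the spin structure on the $q$-primary part).
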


\begin{proof}
  In the proof of
  Theorem~\ref{theorem:topologically-slice-smooth-examples}, we
  observed that the $\Z_2$-homology sphere $M$ has the following
  properties: (i) The subgroup $H$ generated by $q\in H_1(M)=H_1(M)_q$
  is the unique one satisfying $|H|^2 = |H_1(M)_q|$, and (ii)
  $\overline{d}(M,q)\ge 2$.  By
  Proposition~\ref{proposition:obstruction-to-being-H-cob-to-(Z/q)HS},
  it follows that $M$ is not rational homology cobordant to any
  $\Z_q$-homology sphere.
\end{proof}

\begin{remark}
  We can also think of, in place of $\Omega$, the smooth spin rational
  homology cobordism group of spin rational homology 3-spheres, and
  spin analogues of the subgroups $\Omega_T$ and $\Omega_T^{\Z_q}$.
  Our argument above shows that
  Theorem~\ref{theorem:rational-homology-cobordism-mod-Z_q-spheres}
  also holds for this case as well, since one can canonically identify
  $\Spinc(Y)$ with $H^2(Y)$ for manifolds~$Y$ with a chosen spin structure.
\end{remark}

As a corollary we obtain that $\Omega_T/\Omega_T^{\Z}$ is nontrivial,
which is a consequence of
\cite[Theorem~7.1]{Hedden-Livingston-Ruberman:2010-01}.  In fact
\cite[Theorem~7.1]{Hedden-Livingston-Ruberman:2010-01} says that
$\Omega_T/\Omega_T^{\Z}$ has infinite rank.  Regarding this and our
theorem above, one may ask the following question: for an odd
prime~$q$, does $\Omega_T/\Omega_T^{\Z_q}$ have infinite rank?

\subsubsection*{Acknowledgments} We thank Tim Cochran and Kent Orr for
informative comments on the first version of this paper, and for
providing references to earlier work in the area.  We also thank
Charles Livingston for his comments.  The first author was supported
by the National Research Foundation of Korea (NRF) grant funded by the
Korea government (MEST), No.\ 2010--0011629 and 2010--0029638.  The
second author was partially supported by NSF Grants DMS-0804760 and DMS-1105234, and by JSPS Grant-in-Aid No.\ 19340015.

\def\cprime{$'$}

\end{document}